\newtheorem{theorem}{Theorem}[section]
\newtheorem{corollary}{Corollary}[theorem]
\newtheorem{proposition}{Proposition}[section]
\theoremstyle{definition}
\newtheorem{remark}{Remark}[section]
\newcommand{\R}{\mathbb{R}}
\newcommand{\E}{\mathbb{E}}
\newcommand{\e}{\mathbf{e}}
\newcommand{\N}{\mathcal{N}}
\newcommand{\PS}{\mathcal{P}}
\newcommand{\ZS}{\mathcal{Z}}
\title{Clustering, factor discovery and optimal transport}
\author{Hongkang Yang\thanks{Courant Institute of Mathematical Sciences, 251 Mercer Street, New York, NY 10012, USA, \texttt{hy1194@nyu.edu}}
\and Esteban G. Tabak\thanks{Courant Institute of Mathematical Sciences, 251 Mercer Street, New York, NY 10012, USA, \texttt{tabak@cims.nyu.edu}}}
\date{September 9, 2020}
\begin{document}
\maketitle

\begin{abstract}
The clustering problem, and more generally, latent factor discovery --or latent space inference-- is formulated in terms of the Wasserstein barycenter problem from optimal transport. The objective proposed is the maximization of the variability attributable to class, further characterized as the minimization of the variance of the Wasserstein barycenter. Existing theory, which constrains the transport maps to rigid translations, is extended to affine transformations. The resulting non-parametric clustering algorithms include $k$-means as a special case and exhibit more robust performance. A continuous version of these algorithms discovers continuous latent variables and generalizes principal curves. The strength of these algorithms is demonstrated by tests on both artificial and real-world data sets.
\end{abstract}

{\bf Keywords:} Clustering, optimal transport, Wasserstein barycenter, factor discovery, explanation of variability, principal curve

\medskip

{\bf AMS Subject classification:} 62H30, 62H25, 49K30

\section{Introduction}

In order to analyze a given data set, it is often instructive to consider a factor model:
\begin{equation}
\label{factor model}
X = G(Z_1, \dots Z_n),
\end{equation}
where the data is modelled as a random variable $X$ generated from unknown factors, the random variables $Z_i$, through an unknown generating process, the function $G$. The given data is thought of as a sample set drawn from a probability measure $\rho$, and our task is to construct a factor model (\ref{factor model}) such that the law of $X$ equals $\rho$.

This task can be addressed in at least two different ways, leading to two distinct problems:
\begin{itemize}
\item Generative modeling and density estimation: The factors $Z_i$ are fixed, for instance as Gaussians, and the emphasis is on constructing the generator $G$, which helps us generate more samples from $\rho$ or estimate its density function $\rho(x)$. This approach is exemplified by variational autoencoders \cite{kingma2013VAE}, generative adversarial networks \cite{goodfellow2014GAN} and normalizing flows \cite{tabak2010density}.

\item Factor discovery: The emphasis is instead on identifying reasonable and interpretable factors $Z$. An informal criterion for choosing $Z$ is that it can satisfactorily explain the data $\rho$. This approach is exemplified by clustering, principal components and principal surfaces \cite{hastie2005elements,alpaydin2009introduction}.
\end{itemize}
This paper concerns the second problem. Most of the existing works focus on either of two directions:
\begin{enumerate}
\item Effective objective functions that characterize a satisfactory factor $Z$.
\item Effective modelling assumptions on $Z$ and generating process $G$.
\end{enumerate}
To make the discussion more concrete, let us consider an elementary setting:
\begin{equation*}
X = G(Z) + Y
\end{equation*}
where $Z$ is the factor we seek and $Y$ is a random variable that takes care of the component of $X$ unexplained by $Z$. Define the objective function as the mean-square error
\begin{equation}
\label{MSE problem}
L(Z,G) = \inf_{X \sim \rho} \E \big[ \|X-G(Z)\|^2 \big].
\end{equation}
Different modeling assumptions on $Z,G$ could lead to diverse solutions. Suppose that $Z$ is a random variable on some latent space $\ZS$ with law $\nu$:
\begin{itemize}
\item Assumption 1: The latent space is finite, $\ZS=\{1,\dots K\}$. Then (\ref{MSE problem}) assigns $X$ to the nearest point $G(k)$ and thus the problem reduces to $k$-means clustering. The probability measure $\nu$ becomes the weights of the clusters $\rho_k$, and the unexplained $Y$ conditioned on $Z=k$ is the $k$-th cluster with its mean removed.

\item Assumption 2: The latent space is low-dimensional Euclidean $\ZS=\R^k$ and the generator $G$ is linear. Then, the problem reduces to finding a $k$-dimensional principal hyperplane, $\nu$ is the projection of the data $\rho$ onto the hyperplane, and $Y|Z$ are the residuals of $X$ perpendicular to the plane.

\item Assumption 3: $\ZS=\R^k$ and $G$ is smooth. Then, the problem reduces to principal curve/surface/hypersurface, and $Y|Z$ are the residuals of $X$ perpendicular to the surface.
\end{itemize}
Hence, even with the simplest mean-square objective (\ref{MSE problem}), the factor discovery problem contains rich algorithms and applications. The aim of this paper is to design an objective function that is more suitable than (\ref{MSE problem}) for the task of factor discovery, and explore what clustering and principal surface algorithms can be derived from it.\\

The factor discovery problem is not an end in itself. Ultimately, we aim at fully understanding the data $\rho$, identifying in particular all hidden factors that participate in the data-generating process (\ref{factor model}). Once a factor $Z_{1}$ is found by a factor discovery algorithm, we may filter out the influence of $Z_1$ in the data $X$ and proceed to look for other, less prominent factors $Z_{2},Z_3,\dots$ in the filtered data.

As a concrete example of filtering, consider the task of removing batch effects \cite{chen2011removing,tabak2018explanation}, common in biostatistics. Biomedical datasets such as gene expression microarrays are highly vulnerable to batch effects: the data are usually processed and collected from multiple small batches, inevitably introducing a large amount of batch-related variance into the dataset, which corrupts the data and prevents us from discovering meaningful factors other than the batches.
One solution is to study separately each conditional distribution $\rho(\cdot |z)$ (the law of $X|Z=z$, $z\in\ZS$). A problem with this is that each conditional distributions could correspond to very few data points and, in examples other than batch removal, when $\ZS$ can be continuous, no data at all in our finite dataset. In addition, any additionaL factors discovered could be inconsistent across different $z \in \ZS$. Instead, The procedure that we propose consolidates all conditional distributions $\rho(\cdot |z)$ into one representative distribution, their Wasserstein or optimal transport barycenter $\mu$. This will be described in section \ref{sec: background}, but see Figure \ref{fig: demo barycenter} for a demonstration.

The intuition is that the barycenter is the $z$-independent distribution that is closest to all conditional distributions $\rho(\cdot |z)$, so mapping each $\rho(\cdot |z)$ to the berycenter is a way to consolidate all batches into one dataset with minimum distortion. Thus, the barycenter should preserve all meaningful patterns in the data $X$ that are not explainable by $Z$, allowing us to perform factor discovery on this filtered data to uncover additional hidden factors.

\begin{figure}
\centering
\subfloat{\includegraphics[scale=0.3]{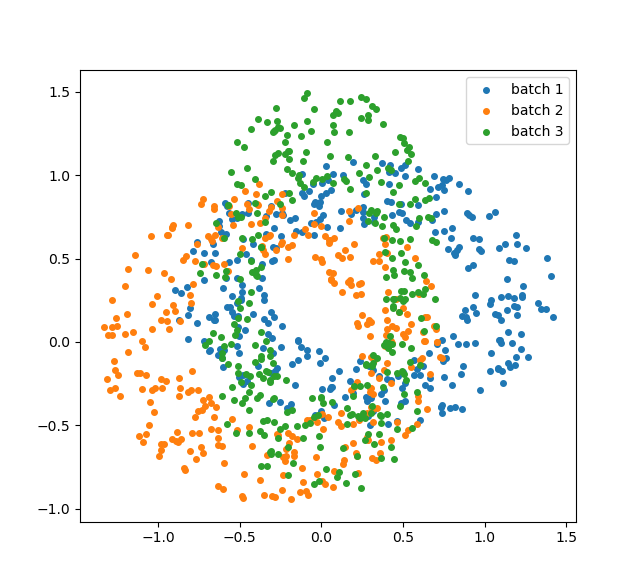}}
\subfloat{\includegraphics[scale=0.32]{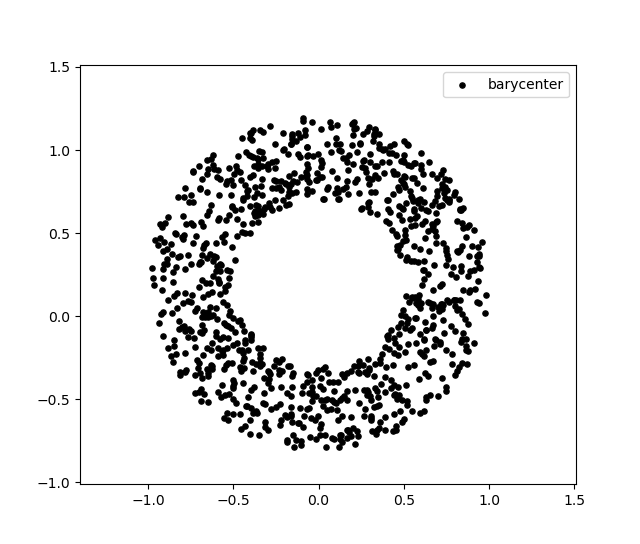}}
\caption{An example of Wasserstein barycenter. Left: Three distributions with annulus shape (represented by their samples). Right: their barycenter, computed with the optimal affine transport maps from Section \ref{sec: background}.}
\label{fig: demo barycenter}
\end{figure}

To see how this procedure affects the choice of objective function, recall the mean-square objective (\ref{MSE problem}):
\begin{equation*}
\min_G L(Z,G) = \min_G \inf_{X \sim \rho} \E\Big[ \E \big[ \|X-G(Z)\|^2 \big| Z=z \big] \Big] = \inf_{X \sim \rho} \E\Big[ Var(X| Z=z) \big] \Big]
\end{equation*}
where the optimal $G(z)$ is the mean of $X|Z=z$. Then, the remaining problem, i.e. factor discovery, is equivalent to minimizing the weighted average of the variances of all conditionals $\rho(\cdot |z)$:
\begin{equation}
\label{objective: expectation variance}
\min_{Z} \inf_{X \sim \rho} \E\Big[ Var(X| Z=z) \big] \Big] = \min_{\rho(\cdot|z),\nu} \Big\{ \E\big[Var(X|z)\big] ~\big|~ X|z \sim \rho(\cdot |z), ~\rho = \int_{\ZS} \rho(\cdot|z) d\nu(z) \Big\}.
\end{equation}
For factor discovery, we replace the conditionals $\rho(\cdot|z)$ by their barycenter $\mu$, which is more suitable for conducting downstream analysis. So it is natural to define a new objective function by the variance of this barycenter:
\begin{equation*}
\min_{\rho(\cdot|z),\nu} \Big\{ Var(Y) ~\big|~ Y \sim \mu, ~\mu \text{ is the barycenter of } \rho(\cdot|z)\nu(z), ~\rho = \int \rho(\cdot|z) d\nu(z) \Big\}.
\end{equation*}
Thus, a minimizer produces the maximum reduction in variance when we go from the raw data $\rho$ to the processed data (the barycenter) $\mu$, which intuitively corresponds to the maximum gain in information.
Finally, replace $\rho(\cdot|z)$ and $\nu$ by the assignment distribution $\nu(\cdot|x)$, which softly assigns latent values $z$ to the sample $x$:
\begin{equation}
\label{objective: factor discovery with barycenter}
\min_{\nu(\cdot|x)} \Big\{ Var(Y) ~\big|~ Y \sim \mu, ~\mu \text{ is the barycenter of } \rho(\cdot|z)\nu(z), ~\rho(\cdot|z) \nu = \nu(\cdot|x)\rho \Big\}
\end{equation}

The application of Wasserstein barycenter to the factor discovery problem was first proposed in \cite{tabak2018explanation}. The main difficulty in solving (\ref{objective: factor discovery with barycenter}) is to compute the barycenter's variance (given $\rho(\cdot|z)$ and $\nu(z)$), for which there are currently two approaches:
\begin{enumerate}
\item General distributions: Compute the barycenter for general conditionals $\rho(\cdot|z)$, which involves finding the optimal transport maps, and optimize the assignment distribution $\nu(\cdot|x)$ on top of this computation. This is the approach in \cite{yang2019BaryNet}, which models and optimizes these functions through neural networks.

\item Special distributions: Assume that the conditionals $\rho(\cdot|z)$ belong to certain distribution families that admit a tractable solution for barycenters. The setting considered in \cite{tabak2018explanation} is that all $\rho(\cdot|z)$ are equivalent up to rigid translations, and thus the barycenter $\mu$ corrresponds to a $z$-dependent rigid translation of each $\rho(\cdot|z)$. It follows that the objective (\ref{objective: factor discovery with barycenter}) reduces to the mean-square objective (\ref{objective: expectation variance}), and from it we can recover classical algorithms such as $k$-means and principal components.
\end{enumerate}

This article follows the second approach and generalizes the assumption of translation equivalence in \cite{tabak2018explanation} to the less stringent condition that the conditionals $\rho(\cdot|z)$ are equivalent up to affine transforms. By taking into consideration the second moments, this generalized objective function leads to clustering and principal surface algorithms that are more robust to the clusters' diversity. In particular, we introduce two practical algorithms: barycentric $k$-means (Algorithm \ref{alg: barycentric clustering isotropic hard}) and affine factor discovery (Algorithm \ref{alg: continuous barycentric clustering}). 
Barycentric $k$-means is a clustering algorithm almost identical to $k$-means, except for a simple change in the update rule: $k$-means assigns each sample $x_i$ to cluster $k_i$ by
\begin{equation*}
k_i \gets \text{argmin}_k ||x_i-\overline{x}_k||^2
\end{equation*}
while barycentric $k$-means assigns $x_i$ by
\begin{equation*}
k_i \gets \text{argmin}_{k}\left(\frac{||x_i - \overline{x}_k||^2}{\sigma_k} + \sigma_k \right)
\end{equation*}
where $\overline{x}_k,\sigma_k$ are the means and standard deviations of cluster $k$. Affine factor discovery is a principal curve/surface algorithm analogous to barycentric $k$-means.

Existing literature includes several clustering algorithms that utilize the second moment information, which we will introduce and compare with our algorithms in Section \ref{sec: related work}. The purpose of this article is not so much to build one new method for clustering or principal surfaces, but rather to develop a paradigm that conceptualizes these algorithms as a procedure for the general factor discovery problem (\ref{factor model}) and to show that optimal transport and Wasserstein barycenter are useful tools for uncovering and assessing hidden factors.\\

The plan of this article is as follows. After this introduction, Section \ref{sec: background} gives a quick overview of optimal transport, Wasserstein barycenter and its affine formulation. Section \ref{sec: clustering} optimizes our factor discovery objective function (\ref{objective: factor discovery with barycenter}) with affine maps and derives clustering algorithms based on $k$-means. Section \ref{sec: continuous clustering} considers the continuous setting when $\ZS=\R$ and derives a more general principal surface algorithm. Section \ref{sec: performance} experiments with these algorithms on synthetic and real-world data. Section \ref{sec: conclusion} summarizes the work and sketches some directions of current research.

\section{Background on the Wasserstein barycenter}
\label{sec: background}

All data distributions in this paper live in $\R^d$, while the factors $z$ belong to a measurable space $\ZS$ to be specified. Denote the raw data by $\rho$. For each data point $x$, the assignment distribution $\nu(z|x)$ indicates the likelihood that factor $z$ corresponds to $x$. One can define the joint distribution $\pi$
\begin{equation}
\label{joint distribution}
\pi(x,z) = \rho(x) \nu(z|x) = \rho(x|z) \nu(z),
\end{equation}
where $\nu$ is the latent distribution of the factors and the conditional distributions $\rho(\cdot|z)$ are commonly referred to as clusters. We say that a joint distribution $\pi=\rho(\cdot|z) \nu$ is a disintegration or clustering plan of $\rho$ if
\begin{equation*}
\rho = \int_{\ZS} \rho(\cdot|z) d\nu(z) = \int_{\ZS} \pi\ dz .
\end{equation*}

Denote by $\PS(\R^d),\PS_2(\R^d),\PS_{ac}(\R^d)$ the space of Borel probability measures in $\R^d$, those with finite second moments and those that are absolutely continuous respectively.
A random variable $X$ with distribution law $\rho$ is denoted by $X \sim \rho$. The notation $T\#\rho = \mu$ indicates that a measurable map $T$ transports (or pushes-forward) a probability measure $\rho$ to another measure $\mu$: If $X\sim \rho$, then $\mu = law(T(x))$. Equivalently, for all measurable subset $A \subseteq \mathbb{R}^d$,
$$\mu(A) = \rho(T^{-1}(A)).$$

The Wasserstein metric $W_2$ or optimal transport cost with square distance \cite{villani2003topics} between probability measures $\rho$ and $\mu$ is given by
\begin{equation}
\label{def: W2 metric}
W_2^2(\rho,\mu) = \inf_{X\sim\rho,Y\sim\mu} \E\big[\|X-Y\|^2\big].
\end{equation}
By Brennier's theorem \cite{brenier1991polar,villani2003topics}, whenever $\rho \in \PS_{2,ac}(\R^d)$, there exists a unique map $T$, the optimal transport map, such that $T\#\rho=\mu$ and
\begin{equation*}
W_2(\rho,\mu) = \E_{X\sim\rho}\big[\|X-T(X)\|^2\big].
\end{equation*}

Given a disintegration (or clustering plan) $\pi = \rho(\cdot|z)\nu$, the Wasserstein barycenter $\mu$ is a minimizer of the average transport cost
\begin{equation}
\label{def: barycenter}
\min_{\mu\in\mathcal{P}_2(\mathbb{R}^d)} \int W_2^2\big(\rho(\cdot|z),\mu\big) ~d\nu(z)
\end{equation}
The barycenter always exists by Theorem 1 of \cite{yang2019BaryNet}. The Wasserstein metric (\ref{def: W2 metric}) can be considered as a measurement of pointwise distortion of the data $\rho$, and thus the barycenter is a way to summarize all conditionals $\rho(\cdot|z)$ into one distribution while minimizing their distortion. Hence, one should expect that the barycenter can reliably preserve the underlying patterns common to all $\rho(\cdot|z)$.

In order to work with the objective function (\ref{objective: factor discovery with barycenter}), we need to make tractable the computation and optimization of the barycenter's variance $Var(Y),Y\sim\mu$. As linear problems are often amenable to closed-form solutions and efficient computations, the Wasserstein barycenter would become much simpler if all $\rho(\cdot|z)$ and $\mu$ are equivalent up to affine transformations, that is, $T\#\rho(\cdot|z_1) = \rho(\cdot|z_2)$ for an affine map $T$.

Intuitively, a sufficient condition for this property is that the conditionals $\rho(\cdot|z)$ have similar shapes (e.g. they are all Gaussians), which implies that the barycenter $\mu$ as their representative should also have that shape. Consequently, the map $T_k$ only needs to translate and dilate the $\rho_k$ to transform them into $\mu$. This is confirmed by Theorem 5 of \cite{yang2019BaryNet}:

\begin{theorem}
\label{thm: covariance formula}
Given any measureable space $Z$ and a joint distribution (\ref{joint distribution}), assume that $\rho$ has finite second moment and each conditional $\rho(\cdot|z)$ is a Gaussian distribution
\begin{equation*}
\rho(\cdot|z) = \N\big(\overline{x}(z),\Sigma(z)\big)
\end{equation*}
with mean $m(z)$ and covariance matrix $\Sigma(z)$.
Then, there exists a barycenter $\mu$, which is a Gaussian with mean and covariance
\begin{align}
\label{barycenter mean integral}
\overline{y} &= \int \overline{x}(z) d\nu(z) = \mathbb{E}_{X\sim\rho}[X],\\
\label{barycenter covariance integral}
\Sigma_y &= \int \big(\Sigma_y^{\frac{1}{2}} \cdot \Sigma(z) \cdot \Sigma_y^{\frac{1}{2}} \big)^{\frac{1}{2}} ~d\nu(z),
\end{align}
where $\Sigma^{\frac{1}{2}}$ is the principal matrix square root.
\end{theorem}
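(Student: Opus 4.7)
The plan is to leverage the closed-form Bures--Wasserstein distance between Gaussians, combined with the first-order characterization of the Wasserstein barycenter via optimal transport maps. Recall that for nondegenerate Gaussians $\N(m_1,\Sigma_1)$ and $\N(m_2,\Sigma_2)$, the $W_2$-optimal (Brenier) map is the affine map $T(x) = m_2 + A(x-m_1)$ with $A = \Sigma_1^{-1/2}(\Sigma_1^{1/2}\Sigma_2\Sigma_1^{1/2})^{1/2}\Sigma_1^{-1/2}$, and $W_2^2$ splits additively into a squared mean distance plus the Bures distance between covariances. Restricting the barycenter minimization (\ref{def: barycenter}) to Gaussian candidates $\N(\overline{y},\Sigma_y)$ therefore decouples the optimization of $\overline{y}$ from that of $\Sigma_y$. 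The mean part is trivially minimized by $\overline{y}=\int \overline{x}(z)\,d\nu(z)$, and the identity $\int \overline{x}(z)\,d\nu(z) = \E_{X\sim\rho}[X]$ is the tower property applied to the disintegration (\ref{joint distribution}), giving (\ref{barycenter mean integral}).

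For the covariance, rather than directly differentiating the Bures functional in matrix coordinates (which is messy), I would invoke the Agueh--Carlier first-order condition: $\mu$ is a Wasserstein barycenter if and only if $\int T_z \,d\nu(z) = \mathrm{id}$ holds $\mu$-almost everywhere, where $T_z$ is the Brenier map from $\mu$ to $\rho(\cdot|z)$. Substituting the affine form above for $\mu=\N(\overline{y},\Sigma_y)$ and matching the constant and linear parts of the equation $\int[\overline{x}(z)+A_z(x-\overline{y})]\,d\nu(z)=x$ yields respectively the mean formula and the linear condition $\int \Sigma_y^{-1/2}(\Sigma_y^{1/2}\Sigma(z)\Sigma_y^{1/2})^{1/2}\Sigma_y^{-1/2}\,d\nu(z)=I$. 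Conjugating the latter by $\Sigma_y^{1/2}$ on both sides produces (\ref{barycenter covariance integral}). Existence of a fixed point for this matrix equation (and hence of the Gaussian candidate itself) is precisely the content of Theorem~1 of \cite{yang2019BaryNet}, invoked earlier in the excerpt.

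The main subtlety I expect is global optimality: the decoupling argument shows that the constructed $\mu$ is optimal only among Gaussians, whereas the theorem asserts it is a barycenter over all of $\PS_2(\R^d)$. The key input here is that the Agueh--Carlier first-order condition $\int T_z\,d\nu(z) = \mathrm{id}$ is sufficient, not merely necessary, a consequence of the displacement convexity of $\mu\mapsto \int W_2^2(\rho(\cdot|z),\mu)\,d\nu(z)$ along $W_2$-geodesics. Since each $T_z$ above is a genuine Brenier map (being the gradient of a convex quadratic potential), this characterization applies and upgrades Gaussian-optimality to full $\PS_2$-optimality. A minor technical caveat is the handling of degenerate $\Sigma(z)$, which I would dispatch by the regularization $\Sigma(z)\mapsto\Sigma(z)+\varepsilon I$ followed by $\varepsilon\to 0$, using continuity of the integrand in (\ref{barycenter covariance integral}) with respect to the Bures metric.
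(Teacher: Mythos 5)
The paper offers no proof of Theorem~\ref{thm: covariance formula}; it quotes the statement as Theorem~5 of \cite{yang2019BaryNet}, so there is nothing internal to compare your argument against. On its own terms your outline follows the expected route: decouple means from covariances via the closed Bures--Wasserstein form, read off the Agueh--Carlier stationarity condition $\int T_z\,d\nu(z)=\mathrm{id}$ for the Brenier maps from $\mu$ to $\rho(\cdot|z)$, and conjugate the linear part by $\Sigma_y^{1/2}$ to recover~(\ref{barycenter covariance integral}). The algebra in those steps is correct, as is the tower-property derivation of~(\ref{barycenter mean integral}).

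There are, however, genuine gaps in the passage from stationarity to the asserted global conclusion. First, you justify sufficiency of the first-order condition by claiming that $\mu\mapsto\int W_2^2(\rho(\cdot|z),\mu)\,d\nu(z)$ is displacement convex along $W_2$-geodesics. It is not: $W_2^2(\rho,\cdot)$ is only convex along \emph{generalized} geodesics based at $\rho$, and in a barycenter functional each integrand has a different base measure, so no single geodesic makes every term convex at once. The actual sufficiency in Agueh--Carlier runs through their linear-programming dual, and that is what would need to be invoked. Second, that argument, and the fixed-point results in \cite{alvarez2016fixed}, are proved for \emph{finite} families of measures, whereas the theorem asserts the result for an arbitrary measurable $\ZS$ --- the remark following Corollary~\ref{cor: isotropic std} is explicit that the finite case is already known from \cite{alvarez2016fixed} and that the continuous case is exactly the added content of Theorem~\ref{thm: covariance formula}. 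Passing from $\sum_k\lambda_k T_k=\mathrm{id}$ to $\int T_z\,d\nu(z)=\mathrm{id}$ requires measurability and integrability of the selection $z\mapsto T_z$ that your sketch does not establish; this is likely where most of the work in the cited proof resides. Finally, Theorem~1 of \cite{yang2019BaryNet} gives existence of a barycenter in $\PS_2(\R^d)$, not existence of a positive-definite solution to the matrix fixed-point equation; appealing to it for the latter leaves a circular dependence between Gaussianity of the barycenter and solvability of~(\ref{barycenter covariance integral}), which would need an independent argument to close.
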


\begin{corollary}
\label{cor: isotropic std}
If we further assume that each $\rho(\cdot|z)$ is isotropic: $\Sigma(z) = \sigma^2(z) \cdot I_d$, then the unique barycenter is also isotropic, with standard deviation
\begin{equation}
\label{average std}
\sigma = \int \sigma(z) d\nu(z).
\end{equation}
\end{corollary}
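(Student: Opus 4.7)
The plan is to apply Theorem \ref{thm: covariance formula} directly, using the isotropic structure to turn the matrix fixed-point equation \eqref{barycenter covariance integral} into a scalar one. Since each $\rho(\cdot|z)$ is Gaussian, the theorem guarantees a Gaussian barycenter $\mu=\N(\overline{y},\Sigma_y)$, so it suffices to determine $\Sigma_y$ and confirm it is isotropic.

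The first step is to propose the ansatz $\Sigma_y = \sigma_y^2 I_d$ for some $\sigma_y\ge 0$ and show it solves \eqref{barycenter covariance integral}. Under this ansatz $\Sigma_y^{1/2}=\sigma_y I_d$, and since all matrices in sight are scalar multiples of the identity they commute, giving
\begin{equation*}
\bigl(\Sigma_y^{1/2}\,\Sigma(z)\,\Sigma_y^{1/2}\bigr)^{1/2}
=\bigl(\sigma_y^2\sigma^2(z)\,I_d\bigr)^{1/2}
=\sigma_y\,\sigma(z)\,I_d.
\end{equation*}
Substituting into \eqref{barycenter covariance integral} yields
\begin{equation*}
\sigma_y^2\, I_d \;=\; \sigma_y \left(\int \sigma(z)\,d\nu(z)\right) I_d,
\end{equation*}
so either $\sigma_y=0$ (the degenerate case where all $\sigma(z)=0$ $\nu$-a.e., and the barycenter is a Dirac at $\overline{y}$) or $\sigma_y=\int\sigma(z)\,d\nu(z)$, which is exactly \eqref{average std}. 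Combined with the mean formula \eqref{barycenter mean integral}, this produces an isotropic Gaussian barycenter with the claimed standard deviation.

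The remaining step is uniqueness. The main obstacle here is simply invoking the right uniqueness result: whenever $\int\sigma(z)\,d\nu(z)>0$, the candidate barycenter has strictly positive covariance and hence is absolutely continuous, so the standard uniqueness result for Wasserstein barycenters with an absolutely continuous minimizer (as used in the proof of Theorem \ref{thm: covariance formula} cited from \cite{yang2019BaryNet}) rules out any other minimizer, isotropic or not. In the degenerate case $\sigma(z)=0$ for $\nu$-a.e.\ $z$, all conditionals are Diracs and the unique barycenter is the Dirac $\delta_{\overline{y}}$, trivially isotropic with standard deviation $0$. Either way the conclusion of the corollary holds.
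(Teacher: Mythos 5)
Your proof is correct, and since the paper states the corollary without a written proof (treating it as an immediate consequence of Theorem \ref{thm: covariance formula}), your route is essentially the intended one. One streamlining is worth noting: you do not actually need the ansatz followed by a uniqueness argument to rule out non-isotropic shapes, because isotropy already falls out of the fixed-point equation directly. For \emph{any} positive-semidefinite $\Sigma_y$, the scalar matrix $\Sigma(z)=\sigma^2(z)I_d$ commutes with everything, so $\Sigma_y^{1/2}\Sigma(z)\Sigma_y^{1/2}=\sigma^2(z)\Sigma_y$ and hence $\bigl(\Sigma_y^{1/2}\Sigma(z)\Sigma_y^{1/2}\bigr)^{1/2}=\sigma(z)\Sigma_y^{1/2}$; integrating, equation \eqref{barycenter covariance integral} collapses to $\Sigma_y=\bigl(\int\sigma(z)\,d\nu(z)\bigr)\Sigma_y^{1/2}$, which forces $\Sigma_y^{1/2}=\bigl(\int\sigma\,d\nu\bigr)I_d$ whenever $\Sigma_y$ is nondegenerate. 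This also tidies a small logical point in the ansatz-only version: verifying that $\sigma_y^2 I_d$ \emph{solves} the equation shows it is \emph{a} Gaussian candidate, not automatically that it is the barycenter's covariance; once one observes the equation has only one nondegenerate solution, Theorem \ref{thm: covariance formula} (which guarantees the barycenter's covariance satisfies it) pins it down without further appeal. Your handling of the degenerate case $\int\sigma\,d\nu=0$ and the final reference to uniqueness of an absolutely continuous barycenter are both fine.
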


\begin{remark}
If we are only concerned with the setting of clustering with a finite latent space $Z=\{1,\dots K\}$, then the above result has been given by Corollary 4.5 of \cite{alvarez2016fixed}. Nevertheless, Section \ref{sec: continuous clustering} studies the continuous case with $Z=\mathbb{R}$, and we need the full strength of Theorem \ref{thm: covariance formula}.
\end{remark}

\medskip
Moreover, the optimal transport maps $T$ between these distributions are also affine. By Theorem 2.1 of \cite{cuesta1996bounds}, the optimal transport map $T_z$ that transports $\rho(\cdot|z)$ to $\mu$ is given by
\begin{align}
\label{optimal affine transport maps}
\begin{split}
T_z(x) &= A_z x + b_z\\
A_z &= \Sigma(z)^{-\frac{1}{2}}\big(\Sigma(z)^{\frac{1}{2}}\Sigma_y\Sigma(z)^{\frac{1}{2}}\big)^{\frac{1}{2}}\Sigma(z)^{-\frac{1}{2}}, ~
b_z = \overline{y} - A_z \overline{x}(z)
\end{split}
\end{align}

From now on, we will always model the conditionals $\rho(\cdot|z)$ as Gaussian distributions. In fact, all our arguments and algorithms apply to the slightly more general case of location-scale families \cite{alvarez2016fixed}, which by definition consist of distributions that are equivalently up to affine transformations.
The restriction to Gaussians conditionals (or location-scale families) is not a stringent requirement in practice. Common clustering methods such as EM algorithm \cite{redner1984mixture,alpaydin2009introduction} often model the clusters as Gaussians, while the ``standard data" for $k$-means consists of spherical clusters with equal radii \cite{selim1991simulated}.

\section{Clustering with affine barycenter}
\label{sec: clustering}

This section studies the setting of discrete factors $\ZS=\{1,\dots K\}$ and derives clustering algorithms based on the objective function (\ref{objective: factor discovery with barycenter}) and the assumption of Gaussian conditionals $\rho(\cdot|z)$.

In practice, we are given the following empirical distribution
\begin{equation*}
\rho^{(n)} = \frac{1}{N} \sum_{i=1}^N \delta_{x_i},
\end{equation*}
where $\{x_i\}_{i=1}^N$ are i.i.d. samples from $\rho$. Their weights are uniformly set to $1/N$ in this paper, but one could also consider a more general case with more/less weights attached, for instance, to outliers or to measurements performed under more or less controlled circumstances.

Any joint distribution $\pi$ from (\ref{joint distribution}) can be expressed as an $N\times K$ matrix. For convenience, denote the conditionals $\rho(\cdot | k)$ by $\rho_k$ for $k \in \{1,\dots K\}$, represent the latent factor distribution $\nu$ by the weights $P_k$, and represent the assignment distribution $\nu(\cdot|x)$ by the following matrix:
\begin{equation*}
P = [P_k^i], ~P_k^i = \nu(k|x_i) = N \cdot \pi(x_i,k).
\end{equation*}
The probability vector $P^i$ representing $\nu(\cdot|x_i)$ is a soft class assignment for $x_i$, which becomes hard when $P^i$ is given by a basis vector $\e_{k_i}$. Each $P^i$ ranges in the $K$-dimensional simplex $\Delta^K$ and the matrix $P$ ranges in the compact convex set $\prod_{i=1}^N \Delta^K$. The domain of hard assignments is the set of extremal points of $\prod_i \Delta^K$ \cite{bezdek2013pattern}.

Once a class assignment $P=N \pi$ is given, the clusters $\rho_k(x)$ can be estimated by formula (\ref{joint distribution}),
\begin{align}
\label{compute clusters}
\rho_k(x_i) &= \frac{\nu(k|x_i)\rho(x_i)}{\nu(k)} = \frac{P_k^i N^{-1}}{P_k} = \frac{P_k^i}{\sum_{j=1}^N P_k^j}.
\end{align}
Then, the cluster means $\overline{x}_k$ and covariances $\Sigma_k$ can be estimated by
\begin{equation}
\label{cluster mean and covariance}
\overline{x}_k = \frac{\sum_i P^i_k x_i }{\sum_i P^i_k}, ~\Sigma_k = \frac{\sum_i P^i_k (x_i - \overline{x}_k) \cdot (x_i - \overline{x}_k)^T }{ \sum_i P^i_k }.
\end{equation}
By Theorem \ref{thm: covariance formula}, the covariance of the barycenter $\mu$ is given by the discrete version of (\ref{barycenter covariance integral}):
\begin{equation}
\label{barycenter covariance discrete}
\Sigma_y = \sum\limits_{k=1}^K P_k \big(\Sigma_y^{\frac{1}{2}}\Sigma_k\Sigma_y^{\frac{1}{2}}\big)^{\frac{1}{2}}.
\end{equation}
This is a non-linear matrix equation that admits a unique positive-definite solution $\Sigma_y$ if at least one of $P_k \Sigma_k$ is positive-definite \cite{alvarez2016fixed}. Then, $\Sigma_y$ can be calculated through the following iteration scheme:
\begin{equation}
\label{iteration scheme}
    \Sigma(n+1) \gets \Sigma(n)^{-\frac{1}{2}} \Big(\sum_{k=1}^K P_k\Sigma(n)^{\frac{1}{2}}\Sigma_k\Sigma(n)^{\frac{1}{2}}\Big)^2 \Sigma(n)^{-\frac{1}{2}},
\end{equation}
where the initialization $\Sigma(0)$ is an arbitrary positive-definite matrix. This iteration is guaranteed to converge to the correct $\Sigma_y$ (when one of $P_k \Sigma_k$ is positive-definite) \cite{alvarez2016fixed}.

Hence, the factor discovery objective (\ref{objective: factor discovery with barycenter}) can be specialized into
\begin{equation}
\label{objective: clustering}
\min_P Tr[\Sigma_y],
\end{equation}
with $\Sigma_y$ defined by (\ref{cluster mean and covariance}) and (\ref{barycenter covariance discrete}).

\begin{remark}
\label{remark: location family}
To gain insight into the functioning of (\ref{objective: clustering}), we can analyze its behavior in the much simplified setting, when the Gaussian clusters $\rho_k$ differ only in their means $\overline{x}_k$. Then Theorem 2.1 of \cite{cuesta1996bounds} implies that the optimal transport maps $T_k$ are translations by $\overline{y}-\overline{x}_k$. In that case, \cite{tabak2018explanation} showed that the barycenter's variance is reduced to the sum of within-cluster variances (equivalently, sum of squared errors, SSE)
\begin{equation}
\label{SSE}
Tr[\Sigma_y] = \sum_{k=1}^K \sum_{i=1}^N P^i_k ||x_i - \overline{x}_k||^2
\end{equation}
which is exactly the objective function of $k$-means. Hence, by upgrading to general Gaussian clusters, one develops a generalization of $k$-means that takes advantage of second moments.
\end{remark}

\subsection{Gradient descent solution}
\label{sec: gradient descent solution}

Our approach is to perform gradient descent on $P\in \prod_i \Delta^K$ to solve for the minimization (\ref{objective: clustering}). Even though the implicit nonlinear matrix equation (\ref{barycenter covariance discrete}) determines $\Sigma_y$ uniquely, it is not clear a priori whether the solution $\Sigma_y$ is differentiable. Thus, we prove in Appendix \ref{appendix: existence of derivative} that the partial derivatives $\partial \Sigma_y / \partial P_k^i$ always exist. Then, in Appendix \ref{appendix: computation of derivative}, we derive explicit formulae for these derivatives.

The gradient of $Tr[\Sigma_y]$ with respect to each sample point $x_i$'s probability vector $P^i$ is given by
\begin{equation}
\label{gradient of objective}
\nabla_{P^i} Tr[\Sigma_y] = \sum_{k=1}^K vec(I)^T \cdot W_k \cdot vec\big[(x_i-\overline{x_k})\cdot (x_i-\overline{x_k})^T + \Sigma_k\big]\e_k
\end{equation}
where $\e_k$ is the basis vector, $vec$ is vectorization (which stacks the columns of the input matrix into one long vector), and the $W_k$ are weight matrices
\begin{align}
\label{weight matrix}
\begin{split}
W_k &:= \big(\Sigma_y^{\frac{1}{2}} \otimes \Sigma_y^{\frac{1}{2}}\big) \Big[\sum\limits_{h=1}^K P_h (U_h \otimes U_h) (D_h^{\frac{1}{2}}\otimes I + I \otimes D_h^{\frac{1}{2}})^{-1}(D_h^{\frac{1}{2}}\otimes D_h^{\frac{1}{2}}) (U_h^T \otimes U_h^T)\Big]^{-1} \\
&\quad \Big[(U_k \otimes U_k) (D_k^{\frac{1}{2}}\otimes I + I \otimes D_k^{\frac{1}{2}})^{-1} (U_k^T \otimes U_k^T)\Big](\Sigma_y^{\frac{1}{2}} \otimes \Sigma_y^{\frac{1}{2}}).
\end{split}
\end{align}
Here $\otimes$ is the Kronecker product, and the $U$ are the orthonormal and $D$ the diagonal matrices in the eigendecompositions
\begin{equation*}
\Sigma_y^{\frac{1}{2}}\Sigma_k\Sigma_y^{\frac{1}{2}} = U_kD_kU_k^T \text{ and } \Sigma_y = U_yD_yU_y^T.
\end{equation*}


The update rule at each time of a gradient descent step $t$ is a projected gradient descent:
\begin{equation*}
P(t+1) = Proj_{\prod_i\Delta^K}\Big({P(t)} -\eta \cdot\nabla_{P}Tr[\Sigma_y]\Big)
\end{equation*}
where $\eta$ is the learning rate and $Proj$ is the projection onto the closest stochastic matrix in $\prod_i\Delta^K$, which can be computed efficiently as described in \cite{wang2013projection}.
The step size $\eta$ can be either fixed at a small value, or determined at each step via backtracking line search \cite{boyd2004convex}, using a threshold $\alpha\in (0,1/2)$ and a shortening rate $\beta\in(0,1)$, and reducing $\eta$ into $\beta \eta$ if the amount of descent is not enough:
$$Tr[\Sigma_y]\big(P(t+1)\big) - Tr[\Sigma_y]\big(P(t)\big) > \alpha ~vec(\nabla_{P}Tr[\Sigma_y])^T \cdot vec\big[P(t+1)-P(t)\big].$$
This descent-based clustering algorithm is summarized below, with initialization based on that of $k$-means.

\medskip
\begin{algorithm}[H]
\KwData{Sample $\{x_i\}$ and number of classes $K$}
Initialize the means $\{\overline{x}_k\}$ randomly\\
Initialize assignment matrix $P$ either randomly or set each $P^i$ to be the one-hot vector corresponding to the mean $\overline{x}_k$ closest to $x^i$\\
\While{not converging}{
	Compute the barycenter's covariance $\Sigma_y$ by iteration (\ref{iteration scheme})\\
	Compute weight matrices $W_1,\dots W_K$ by (\ref{weight matrix})\\
   	Compute the gradient $\nabla_{P}Tr[\Sigma_y] = (\nabla_{P^i}Tr[\Sigma_y])_i = \sum_{i,k} vec(I)^T W_k vec\big[(x_i-\overline{x_k})\cdot (x_i-\overline{x_k})^T + \Sigma_k\big]\e_{ik}$\\
    (Optimize step size $\eta$ by backtracking)\\
    Update $P \gets Proj_{\prod_i\Delta^K}\big({P} -\eta \cdot\nabla_{P}Tr[\Sigma_y]\big)$\\
	Update cluster means $\{\overline{x}_k\}$ and covariances $\{\Sigma_k$\} by (\ref{cluster mean and covariance})\\
}
\Return{\text{\upshape Assignment} $P$}
\caption{Barycentric clustering}
\label{alg: barycentric clustering soft}
\end{algorithm}

\begin{remark}
It might appear at first sight that Algorithm \ref{alg: barycentric clustering soft} performs an alternating descent, similarly to $k$-means, alternating between optimizing the assignment $P$ and updating the means $\overline{x}_k$. Nevertheless, the derivation of (\ref{gradient of objective}) in Appendix \ref{appendix: computation of derivative} does not treat $\overline{x}_k$ as constants. Instead, it directly solves for the gradient $\nabla_P Tr[\Sigma_y]$, incorporating the derivatives $\nabla_P \overline{x}_k$. Hence, Algorithm \ref{alg: barycentric clustering soft} is simply a gradient descent on the objective (\ref{gradient of objective}), which with sufficiently small step size $\eta$ necessarily converges to a critical point.
\end{remark}

\medskip
Recall that $k$-means minimizes the sum of squared errors (\ref{SSE}), whose partial derivatives are simply
$$\partial_{P^i_k} SSE = ||x_i-\overline{x}_k||^2 + 2\sum_j P^j_k (x_j-\overline{x}_k)\frac{\partial \overline{x}_k}{\partial P^i_k} = ||x_i-\overline{x}_k||^2$$
Instead of performing gradient descent, $k$-means directly assigns $x_i$ to the closest cluster $k_i$, that is,
$$k_i= \text{argmin}_k ||x_i-\overline{x}_k||^2 = \text{argmin}_k \partial_{P^i_k} SSE$$
We can interpret this hard assignment as equivalent to a gradient descent on $P^i$ with arbitrarily large step size, followed by the projection $Proj_{\Delta^K}$, so that $P^i$ arrives at an extremal point of $\Delta^K$, which is a one-hot vector.

In exactly the same way, we can simplify Algorithm \ref{alg: barycentric clustering soft} into a hard assignment algorithm, such that each $x_i$ is assigned to the cluster $k_i$ with the smallest gradient term in $\partial_{P^i}Tr[\Sigma_y]$.

\medskip
\begin{algorithm}[H]
\KwData{Sample $\{x_i\}$ and number of classes $K$}
Initialize the means $\{\overline{x}_k\}$ randomly and the labels $k_i$ by the closest mean\\
\While{not converging}{
	Compute the gradient $\nabla_{P}Tr[\Sigma_y]$\\
  \For{$x_i$ in sample}{
   	$k_i \gets \text{argmin}_{k}\big( \partial_{P^i_k}Tr[\Sigma_y] \big)$\\
   }
	Update cluster means $\{\overline{x}_k\}$ and covariances $\{\Sigma_k$\}\\
	(Possibly apply an update rate $c$ to smooth the update: $\overline{x}_k\gets c$ new $\overline{x}_k + (1-c)$ old $\overline{x}_k$)
}
\Return{\text{\upshape Labels} $\{k_i\}$}
\caption{Hard barycentric clustering}
\label{alg: barycentric clustering hard}
\end{algorithm}

One common issue for clustering algorithms that utilize the second moments is that numerical errors could arise when some cluster's covariance becomes non-invertible, or equivalently, the cluster lies on a low-dimensional plane. Specifically such problem could arise for the $D_k$ terms in the weight matrices (\ref{weight matrix}), though it has not been noticed in our experiments in Section \ref{sec: performance}. In practice, one can easily avoid this problem by making the covariances $\Sigma_k$ strictly positive-definite: e.g.
\begin{equation*}
\Sigma_k \gets \Sigma_k + \epsilon I_d
\end{equation*}
for some $\epsilon \ll 1$.

\subsection{Relation to \textit{k}-means}
\label{sec: relation to k-means}

We show next that the barycentric clustering algorithms reduce to $k$-means in the latter's setting. The ``standard data'' for $k$-mean consist of spherical clusters with identical radii and proportions \cite{selim1991simulated}, which implies that $P_1 =\dots =P_K = 1/K$ and $\Sigma_1=\dots=\Sigma_K = \frac{\sigma^2}{d} I$ for some common variance $\sigma^2$. Then the gradient (\ref{gradient of objective}) simplifies into
$$\partial_{P_k^i} Tr[\Sigma_y] = \frac{\sigma^2}{d} Tr\big[(x_i-\overline{x_k})\cdot (x_i-\overline{x_k})^T + \Sigma_k \big] = \frac{\sigma^2}{d} ( ||x_i - \overline{x}_k||^2 + \sigma^2).$$

Since each $P^i$ lies in the simplex $\Delta^K$, the direction of gradient descent must be parallel to $\Delta^K$. Hence the term $\sigma^2$, shared by all entries of $\nabla_{P^i}Tr[\Sigma_y]$, is eliminated by the projection map $Proj_{\prod_i\Delta^K}$ of Algorithm \ref{alg: barycentric clustering soft}, while for Algorithm \ref{alg: barycentric clustering hard}, it is eliminated by the $\text{argmin}_{k}$ step. The resulting gradient $$\nabla_{P^i}Tr[\Sigma_y] = \sum_{k=1}^K ||x_i - \overline{x}_k||^2 \e_k$$ is precisely the gradient of the sum of squared errors (\ref{SSE}), the objective function of $k$-means. It is straightforward to check that Algorithm \ref{alg: barycentric clustering hard} reduces to $k$-means, and thus $k$-means can be seen as a special case of barycentric clustering.\\


\subsection{Isotropic solution}
\label{sec: isotropic solution}

Section \ref{sec: performance} will demonstrate that the barycentric clustering algorithms can recognize clusters that deviate from the ``standard data'', for which $k$-means and fuzzy $k$-means would fail, but this robustness comes at the expense of the complexity of  gradients in (\ref{gradient of objective}) and (\ref{weight matrix}).
Here we explore a situation in between, making hypotheses weaker than the ``standard data'', yet strong enough to yield solutions that, while more robust than $k$-means, are at the same time simpler than (\ref{gradient of objective}) and easier to interpret.

Since the complexity of (\ref{gradient of objective}) results mostly from the non-commutativity of the matrix product, we can impose the assumption that all covariances are of the form
$$\Sigma_k = \frac{\sigma^2_k}{d} I$$
where $\sigma^2_k$ is the variance of cluster $\rho_k$. 
This assumption can be seen as a generalization of the ``standard data'''s requirement that all clusters be radial with equal variances.

From Corollary \ref{cor: isotropic std}, the barycenter's covariance becomes
\begin{equation}
\label{std formula discrete}
\Sigma_y = \frac{\sigma^2_y}{d} I, ~\sigma_y = \sum_{k=1}^K P_k \sigma_k
\end{equation}
and the gradient (\ref{gradient of objective}) reduces to
\begin{equation*}
\partial_{P_k^i} Tr[\Sigma_y] = \frac{\sigma_y^3}{d} \Big(\frac{||x_i-\overline{x}_k||^2}{\sigma_k}+\sigma_k\Big).
\end{equation*}
Since the algorithms are only concerned with the gradient's direction, the gradient is effectively
\begin{equation}
\label{std variance form gradient}
\nabla_{P^i} Tr[\Sigma_y] = \sum_k \Big(\frac{||x_i-\overline{x}_k||^2}{\sigma_k}+\sigma_k\Big) \e_k.
\end{equation}

\begin{remark}
Alternatively, we can obtain the gradient (\ref{std variance form gradient}) directly differentiating the weighted sum of standard deviations (\ref{std formula discrete}). Note that the standard deviation can also be calculated via
$$\sigma_k = \sqrt{Var(\rho_k)} = \Big( \frac{1}{2} \iint ||x-y||^2 d\rho_k(x) d\rho_k(y) \Big)^{\frac{1}{2}} = \frac{\Big(\sum_{i,j=1}^N P_i^k P_j^k ||x_i-x_j||^2\Big)^{\frac{1}{2}}}{\sqrt{2}\sum_{i=1}^N P_i^k}.$$
Then the computation of the gradient
\begin{equation}
\label{std scatter form gradient}
\frac{\partial\sigma_y}{\partial P_i^k} = \frac{\sum_{j=1}^N P_j^k ||x_i-x_j||^2 }{ \Big(2\sum_{i,j=1}^N P^j_k P^l_k ||x_j-x_l||^2\Big)^{\frac{1}{2}}}
\end{equation}
involves the samples only through the pairwise distances $||x_i-x_j||^2$, which can be computed at the onset of the algorithm.
This is helpful when the data space $\mathbb{R}^d$ has very large dimension, so that computing the means $\overline{x}_k$ and distances $\|x_i-\overline{x}_k\|^2$ of (\ref{std variance form gradient}) at each iteration becomes prohibitive.
Moreover, we can replace $\|x_i-x_j\|^2$ with any ``dissimilarity measure" such as Riemannian distance, graph distance or kernel functions (even though naive substitution might not be justified by our barycenter model).
Nevertheless, computing (\ref{std scatter form gradient}) takes $O(N^2)$ time, while (\ref{std variance form gradient}) takes $O(N\cdot d)$, so the latter is more efficient for large sample sets of low-dimensional data.
\end{remark}

In the isotropic scenario, barycentric clustering (Algorithms \ref{alg: barycentric clustering soft} and \ref{alg: barycentric clustering hard}) can be modified via (\ref{std variance form gradient}) into the following:\\

\begin{algorithm}[H]
Initialize the means $\{\overline{x}_k\}$ randomly and the stochastic matrix $P$ (by the closest $\overline{x}_k$)\;
\While{not converging}{
   	Compute and normalize the gradient $\nabla_{P}Tr[\Sigma_y] = \sum_{i,k} \big(\sigma_k+\frac{||x_i-\overline{x}_k||^2}{\sigma_k}\big) \e_{ik}$\\
    Optimal step size $\eta$ by backtracking\\
    Update $P \gets Proj_{\prod_i\Delta^K_i}\big({P} -\eta \cdot\nabla_{P}Tr[\Sigma_y]\big)$
    
    Update the cluster means $\overline{x}_k$ and standard deviations $\sigma_k$
}
\Return{\text{\upshape Assignment} $P$}
\caption{Isotropic barycentric clustering}
\label{alg: barycentric clustering isotropic soft}
\end{algorithm}

\medskip

\begin{algorithm}[H]
Initialize the means $\{\overline{x}_k\}$ (randomly) and labels $k_i$ (by the closest $\overline{x}_k$)\;
\While{not converging}{
    Update cluster means $\overline{x}_k$ and standard deviations $\sigma_k$\\
  \For{$x_i$ in sample}{
   	$k_i \gets \text{argmin}_{k}\big(\frac{||x_i - \overline{x}_k||^2}{\sigma_k} + \sigma_k \big)$\;
   }
}
\Return{\text{\upshape Labels} $\{k_i\}$}
\caption{Barycentric $k$-means}
\label{alg: barycentric clustering isotropic hard}
\end{algorithm}

\noindent
We name Algorithm \ref{alg: barycentric clustering isotropic hard} ``Barycentric $k$-means", as it closely resembles $k$-means.

As discussed in Section \ref{sec: gradient descent solution}, numerical errors could arise if some cluster degenerates into a point and its standard deviation $\sigma_k$ vanishes. In practice, one can replace the denominator $\sigma_k$ in the gradient (\ref{std variance form gradient}) by $\sigma_k + \epsilon$ for some small $\epsilon$.

Section \ref{sec: performance} will confirm the expectation that these algorithms are more robust than $k$-means under varying proportions and radii ($P_k$ and $\sigma^2_k$), but are more vulnerable to non-isotropy ($\Sigma_k$ not of the form $\sigma^2_k I$) than Algorithms \ref{alg: barycentric clustering soft} and \ref{alg: barycentric clustering hard}.

\subsection{Relation to Mahalanobis distance}
\label{sec: related work}

Barycentric clustering is not the first clustering algorithm that deals with non-isotropic clusters using second moment information. A series of clustering methods \cite{chernoff1979metric,gustafson1979fuzzy,krishnapuram1999note} based on $k$-means measure the distance between sample points and clusters by the Mahalanobis distance:
\begin{equation}
\label{Mahalanobis distance}
d^2(x_i,\overline{x}_k) = (x_i-\overline{x}_k)^T \Sigma_k^{-1}(x_i-\overline{x}_k),
\end{equation}
which reduces the distance along the directions corresponding to the large eigenvalues of the covariance $\Sigma_k$. However, as pointed out in \cite{krishnapuram1999note},  applying (\ref{Mahalanobis distance}) to $k$-means has the problem that the objective function (\ref{SSE}) becomes trivial:
$$SSE = \sum_{i,k} P_k^i (x_i-\overline{x}_k)^T \Sigma_k^{-1}(x_i-\overline{x}_k) = \sum_k P_k Tr[\Sigma_k\Sigma_k^{-1}] \equiv Tr[I].$$
The Gustafson–Kessel algorithm \cite{gustafson1979fuzzy,krishnapuram1999note} remedies this problem by modifying (\ref{Mahalanobis distance}) into
\begin{equation}
\label{Mahalanobis distance modified}
d^2(x_i,\overline{x}_k) = \text{det}(\Sigma_k)^{\frac{1}{d}} (x_i-\overline{x}_k)^T \Sigma_k^{-1}(x_i-\overline{x}_k).
\end{equation}

To compare barycentric clustering and the Mahalanobis distance-based algorithms, note that (\ref{Mahalanobis distance}) is dimensionless, in the sense that any shrinkage or dilation of cluster $\rho_k$ (with respect to the mean $\overline{x}_k$) would be completely cancelled out in (\ref{Mahalanobis distance}), which explains how its objective function becomes trivial. Meanwhile, both the squared Euclidean distance and the modified Mahalanobis distance (\ref{Mahalanobis distance modified}) have dimension $[l]^2$, where $[l]$ denotes the unit of length. For our algorithms, if we assume that the weight $P_k$ is small so that the influence of $\Sigma_k$ on $\Sigma_y$ is small, then it is routine to check that the gradient (\ref{gradient of objective}, \ref{weight matrix}) has dimension $[l]$.

This comparison becomes explicit in the isotropic setting:
$$\frac{||x_i-\overline{x}_k||^2}{\sigma^2_k}, ~||x_i-\overline{x}_k||^2, ~\frac{||x_i-\overline{x}_k||^2}{\sigma_k}+\sigma_k$$
The first term is the Mahalanobis distance (\ref{Mahalanobis distance}), the second term is squared Euclidean distance or equivalently the modified Mahalanobis (\ref{Mahalanobis distance modified}), and the third term is the gradient (\ref{std variance form gradient}), the isotropic version of (\ref{gradient of objective}). Hence, our algorithms can be seen as a balanced solution between the Euclidean case that completely ignores second moment information and the Mahalanobis case where too much normalization nullifies the problem.

As a side remark, the EM algorithm \cite{redner1984mixture,alpaydin2009introduction} is also a clustering method that can recognize non-isotropic clusters, typically modeling $\rho_k$ as Gaussian distributions. The Gaussians are essentially an exponential family built from the Mahalanobis distance (\ref{Mahalanobis distance}), indicating a possible connection between EM and our gradients (\ref{std variance form gradient}, \ref{gradient of objective}), and thus a connection between maximum-likelihood-based method and the Wasserstein barycenter framework.

\subsection{An alternative approach}
\label{sec: alternative approach}

We describe briefly here a different approach to solving the clustering problem (\ref{objective: clustering}), which avoids directly computing the barycenter $\mu$. By the Variance Decomposition Theorem of \cite{yang2019BaryNet}, we have the identity
\begin{equation*}
Tr[\Sigma_x] = Tr[\Sigma_y] + \int W_2^2\big(\rho(\cdot|z),\mu\big) d\nu(z) ,
\end{equation*}
where $\Sigma_x,\Sigma_y$ are the covariances of the data $\rho$ and the barycenter $\mu$. Since $\Sigma_x$ is fixed, the clustering problem (\ref{objective: clustering}) that minimizes $Tr[\Sigma_y]$ is equivalent to the following maximization problem
\begin{equation}
\label{total transport cost objective}
\max_{\nu(\cdot|x)} \int W_2^2\big(\rho(\cdot|z),\mu\big) d\nu(z) = \max_{P} \sum_{k=1}^K P_k W_2^2(\rho_k,\mu)
\end{equation}
where as usual, $P = [P_i^k]$ denotes the assignment distribution $P^i_k = \nu(k|x_i)$ and the clusters $\rho_k$ are determined by (\ref{compute clusters}).

Now we seek to remove $\mu$ from (\ref{total transport cost objective}). It is an interesting observation that the term $\sum_k P_k W_2^2(\rho_k,\mu)$ appears like a ``variance", the weighted mean square distance between each ``point" $\rho_k$ and their ``mean" $\mu$. Recall that we have the following identity:
\begin{equation*}
\forall \rho \in \mathcal{P}_2(\mathbb{R}^d), ~\int ||x-\overline{x}||^2 d\rho(x) = \frac{1}{2} \iint ||x-y||^2 d\rho(x)d\rho(y)
\end{equation*}
which computes the variance without involving the mean $\overline{x}$. Similarly, the following result allows us to compute (\ref{total transport cost objective}) without $\mu$.

\begin{proposition}
\label{thm: scatter formulation of total transport cost}
For any measurable space $Z$ and joint distribution $\pi$ from (\ref{joint distribution}), assume that $\rho$ has finite second moments and all conditionals $\rho(\cdot|z)$ are Gaussian. Let $\mu$ be any Wasserstein barycenter of $\pi$. Then, we have the following identity:
\begin{equation}
\label{scatter formulation of total transport cost}
\int W_2^2\big(\rho(\cdot|z),\mu\big) d\nu(z) = \frac{1}{2} \iint W_2^2\big(\rho(\cdot|z_1),\rho(\cdot|z_2)\big) d\nu(z_1)d\nu(z_2)
\end{equation}
\end{proposition}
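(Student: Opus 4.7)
The plan is to expand both sides using the closed-form Gaussian Wasserstein distance and to use the fixed-point equation (\ref{barycenter covariance integral}) to collapse both sides to the same quantity. Recall that for Gaussians,
\begin{equation*}
W_2^2\bigl(\N(m_1,\Sigma_1),\N(m_2,\Sigma_2)\bigr) = \|m_1-m_2\|^2 + Tr[\Sigma_1] + Tr[\Sigma_2] - 2\,Tr\bigl[(\Sigma_1^{1/2}\Sigma_2\Sigma_1^{1/2})^{1/2}\bigr].
\end{equation*}
Substituting into (\ref{scatter formulation of total transport cost}) splits the identity into a mean part and a covariance part, which I would handle separately.

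For the mean part, Theorem \ref{thm: covariance formula} gives $\overline{y}=\int \overline{x}(z)\,d\nu(z)$, so the desired equality $\int\|\overline{x}(z)-\overline{y}\|^2 d\nu(z)=\tfrac{1}{2}\iint\|\overline{x}(z_1)-\overline{x}(z_2)\|^2 d\nu(z_1)d\nu(z_2)$ is immediate from the classical Euclidean variance identity applied to the family $\{\overline{x}(z)\}\subset\R^d$ with weights $\nu$.

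For the covariance part, the aim is to reduce both sides to $\int Tr[\Sigma(z)]\,d\nu(z) - Tr[\Sigma_y]$. On the LHS, using the trace identity $Tr[(A^{1/2}BA^{1/2})^{1/2}]=Tr[(B^{1/2}AB^{1/2})^{1/2}]$ (both are $Tr|A^{1/2}B^{1/2}|$) and taking the trace of the fixed-point equation (\ref{barycenter covariance integral}) yields
\begin{equation*}
\int Tr\bigl[(\Sigma(z)^{1/2}\Sigma_y\Sigma(z)^{1/2})^{1/2}\bigr] d\nu(z) = \int Tr\bigl[(\Sigma_y^{1/2}\Sigma(z)\Sigma_y^{1/2})^{1/2}\bigr] d\nu(z) = Tr[\Sigma_y],
\end{equation*}
so the LHS covariance part collapses as claimed. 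Matching this on the RHS then reduces the whole proof to the key identity
\begin{equation*}
Tr[\Sigma_y] = \iint Tr\bigl[(\Sigma(z_1)^{1/2}\Sigma(z_2)\Sigma(z_1)^{1/2})^{1/2}\bigr] d\nu(z_1)\,d\nu(z_2).
\end{equation*}

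My plan to establish the key identity is to exploit the affine transport maps (\ref{optimal affine transport maps}). Writing $A_z^{-1}=\Sigma_y^{-1/2}(\Sigma_y^{1/2}\Sigma(z)\Sigma_y^{1/2})^{1/2}\Sigma_y^{-1/2}$ and integrating against $\nu$, the barycenter equation (\ref{barycenter covariance integral}) gives the striking identity $\int A_z^{-1}d\nu(z)=I$. Inserting two such copies of $I$ yields $Tr[\Sigma_y]=\iint Tr[\Sigma_y A_{z_1}^{-1}A_{z_2}^{-1}]\,d\nu(z_1)d\nu(z_2)$, so the remaining task is to identify each integrand with $Tr[(\Sigma(z_1)^{1/2}\Sigma(z_2)\Sigma(z_1)^{1/2})^{1/2}]$. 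I expect this last step to be the main obstacle. It follows if the composed affine map $T_{z_2}^{-1}\!\circ T_{z_1}$, with Jacobian $A_{z_2}^{-1}A_{z_1}$, coincides with the Brenier-optimal transport map between $\rho(\cdot|z_1)$ and $\rho(\cdot|z_2)$; equivalently, that the multi-coupling $(T_z^{-1}(Y))_{z\in\ZS}$ induced by $Y\sim\mu$ has Brenier-optimal pairwise marginals. For two Gaussians one can check directly from the displacement geodesic that $A_{z_2}^{-1}A_{z_1}$ is symmetric positive-definite and equals the pairwise Brenier map, but for general $\nu$ the cleanest route is to invoke the Agueh--Carlier equivalence between the barycenter problem and the multi-marginal OT problem: the multi-coupling built from $A_z^{-1}$ attains the multi-marginal minimum, and combining this with the pairwise lower bound $\int\|x_{z_1}-x_{z_2}\|^2 d\gamma_{z_1z_2}\geq W_2^2(\rho(\cdot|z_1),\rho(\cdot|z_2))$ is what closes the argument in the Gaussian setting where both bounds are tight.
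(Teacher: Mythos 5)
Your decomposition (expand the Gaussian $W_2$ formula, split into a mean part and a covariance part) is a genuinely different route from the paper's, which works probabilistically with the random vectors $X_k = T_k^{-1}(Y)$, the Agueh--Carlier identity $Y=\sum_k P_k X_k$, and the Euclidean variance identity applied to $\{X_k\}$. Your mean part is fine, and the reduction of the covariance part to the ``key identity''
\begin{equation*}
Tr[\Sigma_y] = \iint Tr\bigl[(\Sigma(z_1)^{1/2}\Sigma(z_2)\Sigma(z_1)^{1/2})^{1/2}\bigr]\,d\nu(z_1)\,d\nu(z_2)
\end{equation*}
is correct, as is the observation that $\int A_z^{-1}\,d\nu(z)=I$. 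You also correctly identify what remains: showing that $T_{z_2}^{-1}\circ T_{z_1}$ (Jacobian $A_{z_2}^{-1}A_{z_1}$) is the Brenier map from $\rho(\cdot|z_1)$ to $\rho(\cdot|z_2)$. This is indeed the heart of the matter, and it is exactly the lemma ($T_h^{-1}\circ T_k = T_{kh}$) that the paper's proof also hinges on.

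However, the two routes you propose to close this gap do not work as stated. The ``displacement geodesic'' argument only applies when $K=2$: for a general $\nu$ the barycenter $\mu$ does not lie on the Wasserstein geodesic between $\rho(\cdot|z_1)$ and $\rho(\cdot|z_2)$, so that geometry gives no a priori reason for $A_{z_2}^{-1}A_{z_1}$ to be symmetric. The Agueh--Carlier multi-marginal equivalence, applied carefully, yields only one inequality: since the optimal multi-coupling $\gamma^*$ is precisely the one built from the maps $T_z^{-1}$, and since for every pair $\int\|x_{z_1}-x_{z_2}\|^2 d\gamma^*_{z_1 z_2}\ge W_2^2(\rho(\cdot|z_1),\rho(\cdot|z_2))$, one gets the LHS of (\ref{scatter formulation of total transport cost}) $\ge$ the RHS. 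Turning this into an equality requires showing each pairwise marginal of $\gamma^*$ is itself a $W_2$-optimal coupling, which is exactly the lemma you need to prove; saying ``both bounds are tight in the Gaussian setting'' asserts the conclusion rather than deriving it. The paper closes this gap with a direct algebraic argument: $T_{kh}^{-1}\circ T_h^{-1}\circ T_k$ is an affine self-transport of the non-degenerate Gaussian $\rho_k$ whose linear part, being a product of symmetric positive-definite matrices, is diagonalizable with positive real spectrum; the only such affine self-transport of a non-degenerate Gaussian is the identity, whence $T_h^{-1}\circ T_k=T_{kh}$. If you supply that argument (or an equivalent one), your proof goes through and arguably makes more transparent why the Gaussian assumption is needed; without it, the proposal has a genuine gap at its declared ``main obstacle.''
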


\begin{proof}
See Appendix \ref{appendix: scatter formulation}.
\end{proof}

It follows that the clustering problem (\ref{objective: clustering}) is equivalent to
\begin{equation}
\label{objective: clustering pairwise}
\max_{P} \frac{1}{2} \sum_{k,h=1}^K P_k P_h W_2^2(\rho_k,\rho_h).
\end{equation}
By Theorem 2.1 of \cite{cuesta1996bounds}, as the clusters $\rho_k$ are assumed Gaussian, the cost terms $W_2^2(\rho_k,\rho_h)$ can be computed via
\begin{equation}
\label{location-scale transport cost}
W_2^2(\rho_k,\rho_h) = ||\overline{x}_k-\overline{x}_h||^2 + Tr[\Sigma_k]+Tr[\Sigma_h]-2Tr\big[(\Sigma_h^{\frac{1}{2}}\Sigma_k\Sigma_h^{\frac{1}{2}})^{\frac{1}{2}}\big]
\end{equation}
Lemma 2.4 of \cite{rippl2016limit} provides a formula for the partial derivatives of (\ref{location-scale transport cost}) with respect to $\overline{x}_k,\Sigma_k$. Hence one could optimize problem (\ref{objective: clustering pairwise}) using gradient descent.

Although this approach avoids computing the iteration (\ref{iteration scheme}), a drawback is that its objective function contains $O(K^2)$ terms as we are solving pairwise distances.

\section{Continuous extension}
\label{sec: continuous clustering}

Having studied the simplest case with discrete latent space $\ZS = \{1,\dots K\}$ and the resulting clustering problem (\ref{objective: clustering}), this section extends the analysis to continuous $\ZS$ and derives novel algorithms that construct principal curves and surfaces.

For brevity, we focus on the simple case when $\ZS=\R$ and all clusters $\rho(\cdot|z)$ are isotropic, as it is straightforward to extend the discussion to more general cases.
Hence, given a joint distribution $\pi=\rho(\cdot|z)\nu$, we assume that all conditionals are isotropic Gaussians $\rho(\cdot|z) = \N(\overline{x}(z),\sigma^2(z)I_d)$ with means $\overline{x}(z)$ and variance $\sigma^2(z)$ (in each dimension). Then, their barycenter $\mu$ is given by Corollary \ref{cor: isotropic std}.

In practice, we are only given a finite sample set $\{x_i\}_{i=1}^N \subseteq \R^d$. If one uses hard assignment $\nu(\cdot|x_i) = \delta_{z_i}$ and model each label $z_i \in \R$ as an independent variable to be optimized, it would be impossible to evaluate the barycenter's variance (\ref{average std}): since the sample set is finite whereas there are infinitely many $z \in \R$, almost all conditionals $\rho(\cdot|z)$ will be represented by zero or at most one sample point.

Our solution is inspired by human vision. For the image below, it is evident that $\rho(\cdot|z_1)$ has greater variance than $\rho(\cdot|z_0)$, even though there is no sample point whose assignment is exactly $z_0$ or $z_1$. The key is that we can estimate $\rho(\cdot|z_1)$ using the points nearby, $\{x_i | z_i \approx z_1\}$.

\begin{figure}[H]
\centering
\subfloat{\includegraphics[scale=0.65]{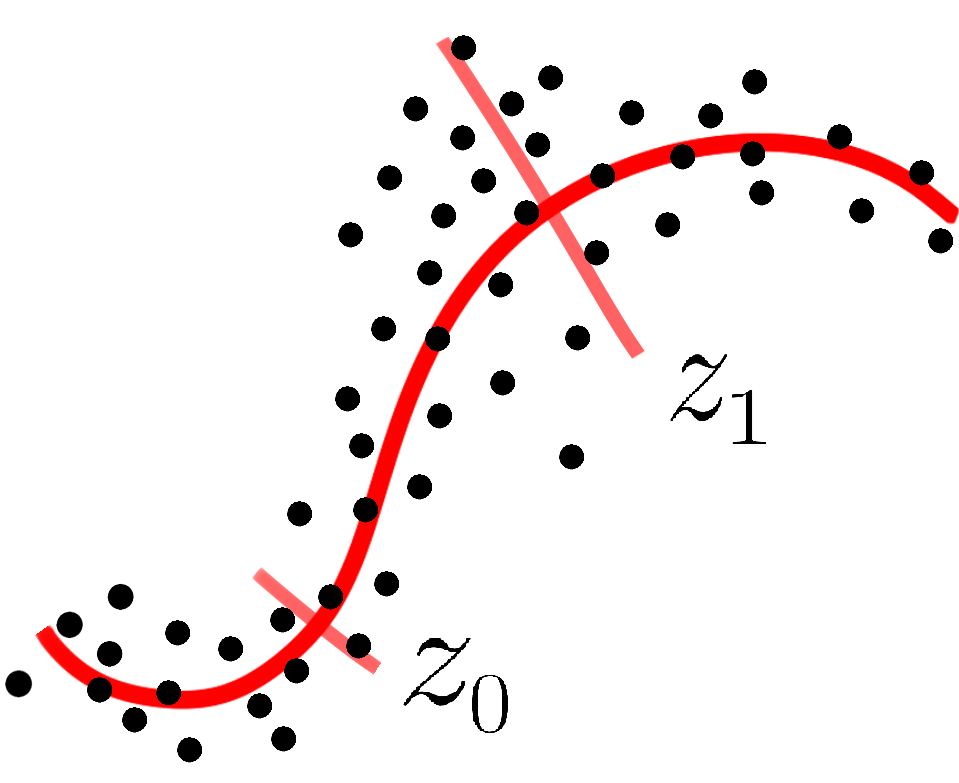}}
\hspace{1mm}
\subfloat{\includegraphics[scale=0.65]{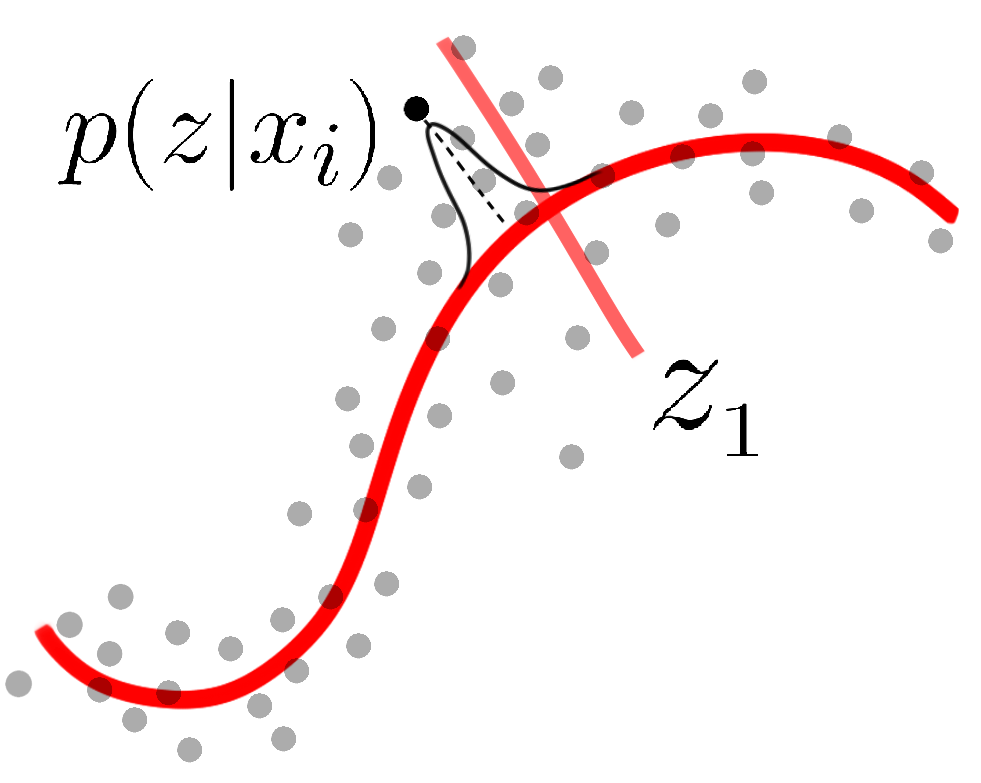}}
\caption{A two-dimensional sample set. Left: assignment of $z$ by orthogonal projection onto the red curve. Right: soft assignment $\nu(\cdot|x_i)$.}
\label{fig: soft labeling}
\end{figure}

\noindent
Hence, it is natural to use a soft assignment $\nu(\cdot|x_i)$ on $\ZS=\R$, which is concentrated around some $z_i$ and decays for $z$ far away from $z_i$. Effectively, the latent distribution $\{z_i\}$ is smoothed into
$$\nu = \frac{1}{N}\sum_{i=1}^N \nu(\cdot|x_i)$$
Given any $z$, the conditional density $\rho(x|z)$ can be estimated using Bayes' formula,
\begin{align}
\label{Bayes conditional density}
\rho(x_i|z) &= \frac{\nu(z|x_i)\rho(x_i)}{\nu(z)} = \frac{\nu(z|x_i)}{\sum_{j=1}^N \nu(z|x_j)},
\end{align}
and Corollary \ref{cor: isotropic std} implies that the barycenter's standard deviation $\sigma$ is given by
\begin{equation}
\label{std variance form continuous}
\sigma = \int \sigma(z) d\nu(z) = \int \Big[\frac{\sum_{i=1}^N ||x_i-\overline{x}||^2 \nu(z|x_i)}{\sum_{i=1}^N \nu(z|x_i)} \Big]^{\frac{1}{2}} d\nu(z).
\end{equation}
The objective of clustering (\ref{objective: factor discovery with barycenter}) is now equivalent to
\begin{equation}
\label{clustering objective continuous}
\min_{\nu(\cdot|x)} \sigma = \min_{\nu(\cdot|x)} \int \Big[\frac{\sum_{i=1}^N ||x_i-\overline{x}||^2 \nu(z|x_i)}{\sum_{i=1}^N \nu(z|x_i)} \Big]^{\frac{1}{2}} d\nu(z)
\end{equation}

For simplicity, we parameterize the assignment distribution of each sample $x_i$ by a one-dimensional Gaussian, $\nu(\cdot|x_i) = \N(\overline{z}_i,\epsilon^2)$, where $\overline{z}_i$ are the means and $\epsilon^2$ is the common variance. The means $\overline{z}_i$ are parameters by which we minimize (\ref{std variance form continuous}), and we define the vector $\overline{z} = [\overline{z}_i]$. When the sample set $\{x_i\}$ is large, rather than having an independent variable $\overline{z}_i$ for each $x_i$, one can replace $\overline{z}_i$ by a parameterized function $\overline{z}(x_i)$, for instance a neural network.

Note that $\epsilon^2$ should adapt to the set $\{\overline{z}_i\}$. Otherwise, a fixed $\epsilon^2$ would lead to the trivial solution where the $\overline{z}_i$ are arbitrarily far apart, so that by (\ref{Bayes conditional density}) each conditional $\rho(\cdot|z)$ would be concentrated at some $x_i$ and $\sigma(z)$ would go to zero.
Hence, we choose $\epsilon^2$ so as to make the distributions $\nu(\cdot|x_i)$ close to each other for nearby $x_i$. Intuitively, for some fixed $0 < \alpha < 1$ (e.g. $\alpha = 10\%$), we want that for each $\rho(\cdot|z)$, roughly a fraction $\alpha$ of the $\{x_i\}$ participate in $\rho(\cdot|z)$. The trivial solution corresponds to $\alpha \approx 0$, and one should not set $\alpha \approx 1$ either, for otherwise each $x_i$ would have significant presence in each $\rho(\cdot|z)$, contrary to the goal of clustering as a partition of $\{x_i\}$. Hence we set the following objective for $\epsilon^2$, based on maximum likelihood:
$$\max_{\epsilon^2} \prod_{i,j=1}^N \N \big(\overline{z}_j ~\big|~ \overline{z}_i,(\epsilon/\alpha)^2\big)$$
%
%
with optimal solution given by
\begin{equation}
\label{label variance def variance}
\epsilon^2 = \frac{\alpha^2}{2N^2} \sum_{i,j}^N ||\overline{z}_i-\overline{z}_j||^2 = \frac{\alpha^2}{N} \sum_{i=1}^N ||\overline{z}_i - \overline{\overline{z}}||^2 = \alpha^2 Var(\{\overline{z}_i\}),
\end{equation}
where $\overline{\overline{z}}$ is the sample mean. This choice of $\epsilon^2$ dilates $\nu(\cdot|x_i)$ proportionally to the spread of the $\{\overline{z}_i\}$, thus preventing the trivial solution.

To fix $\overline{z}_i$, we can further require that their mean should not drift away from $0$. Adding this an extra term $\overline{\overline{z}}^2$ to the penalty yields the simpler formula
\begin{equation}
\label{label variance def L2}
\epsilon^2 = \alpha^2 (Var(\{\overline{z}_i\}) + \overline{\overline{z}}^2) = \alpha^2 \frac{\|\overline{z}\|^2}{N},
\end{equation}
%
so we propose
\begin{equation}
\label{soft assignment over real line}
\nu(\cdot|x_i) = \mathcal{N} \Big( \overline{z}_i, \frac{\alpha^2 ||\overline{z}||^2}{N} \Big).
\end{equation}

\medskip
Next, we derive the gradient of the barycenter's standard deviation $\sigma$. By Appendix \ref{appendix: continuous gradient}, we can differentiate under the integral sign in (\ref{std variance form continuous}) to obtain the following gradient:
\begin{align}
\label{std expectated gradient}
\frac{\partial \sigma}{\partial \overline{z}_i} &= \mathbb{E}_{v}[G_i(z)]\\
\label{std gradient continuous pointwise}
G_i(z) &= \frac{1}{2N\cdot \nu(z)} \Big[ C(z) \overline{z}_i + \frac{z-\overline{z}_i}{\epsilon^2} \nu(z|x_i) \Big(\sigma(z)+\frac{||x_i-\overline{x}(z)||^2}{\sigma(z)} \Big) \Big] \\
\label{std gradient continuous constant}
C(z) &= \frac{1}{||\overline{z}||^2}\sum_{j=1}^N \Big[\sigma(z)+\frac{||x_j-\overline{x}(z)||^2}{\sigma(z)}\Big] \cdot \Big[ \frac{||z-\overline{z}_j||^2}{\epsilon^2} - \frac{1}{||\overline{z}||} \Big] \rho(z|x_j)
\end{align}
The computation of the integrand $G_i(z)$ for any $z$ takes linear time $O(N)$.
Estimating the expectation (\ref{std expectated gradient}) by random sampling from the latent distribution $\nu$, we obtain the following stochastic gradient descent algorithm,
which will be tested in Section \ref{sec: seismic test}:

\begin{algorithm}[H]
\textbf{Input:} Sample $\{x_i\}_{i=1}^N$, learning rate $\eta$, proportion constant $\alpha$.\\
Initialize each $\overline{z}_i$ either randomly in $[-1,1]$ or proportionally to the principal component of the sample.\\
\While{not converging}{
Randomly sample a latent variable $z$ from $\nu = \frac{1}{N} \sum_{i=1}^N \nu(\cdot|x_i)$.\\
Compute the conditional mean $\overline{x}(z)$ and standard deviation $\sigma(z)$\\
Compute the constant $C(z)$ by (\ref{std gradient continuous constant})\\
Update each $\overline{z}_i$ by the gradient (\ref{std gradient continuous pointwise}): $\overline{z}_i \gets \overline{z}_i - \eta G_i(z)$
}
\Return{$\overline{z}_i$}
\caption{Affine Factor Discovery. It can be seen as a continuous version of Algorithm \ref{alg: barycentric clustering isotropic soft}.}
\label{alg: continuous barycentric clustering}
\end{algorithm}

\begin{remark}
\label{remark: principal curve}
Whenever we obtain a joint distribution $\pi = \nu(\cdot|x)\rho$, the conditional mean $\overline{x}(z)$ of $\rho(\cdot|z)$ can be seen as a curve, parameterized by $z\in\mathbb{R}$, that summarizes the data $\rho$. If, as in Remark \ref{remark: location family}, we make the simplifying assumption that all conditional $\rho(\cdot|z)$ are equivalent up to translations, then the variance of the barycenter can be computed by
\begin{equation*}
\forall z, ~\sigma^2 = \sigma^2(z) \longrightarrow \sigma^2 = \int \sigma^2(z) d\nu(z) \approx \frac{1}{N}\sum_{i=1}^N ||x_i-\overline{x}(z_i)||^2
\end{equation*}
Or, in terms of soft assignments from (\ref{std variance form continuous}),
\begin{equation*}
\sigma^2 = \frac{1}{N}\sum_{i=1}^N \int ||x_i-\overline{x}(z_i)||^2 d\nu(z|x_i)
\end{equation*}
Since the factor discovery (\ref{objective: expectation variance}) is to minimize $\sigma^2$, an immediate corollary is that, given any curve $\overline{x}(z)$, the sample $x_i$ should be assigned to the closest point on $\overline{x}(z)$:
\begin{equation*}
z_i = \text{argmin}_z ||x_i - \overline{x}(z) ||^2
\end{equation*}
or the soft assignment $\nu(\cdot|x_i)$ should be concentrated around this $z_i$. It follows that our formulation of clustering is reduced to an alternating descent algorithm that alternates between updating the conditional means $\overline{x}(z)$ and reassigning $z_i$. Yet, this procedure is exactly the principal curve algorithm \cite{hastie1989principal,hastie2005elements}. Hence, problem (\ref{clustering objective continuous}) is a generalization of principal curves (and principal surfaces if we set $\ZS=\R^k$).
\end{remark}

\section{Performance}
\label{sec: performance}

Sections \ref{comparison of soft assignments} and \ref{comparison of hard assignments} compare $k$-means and the barycentric clustering algorithms on artificial data that deviate from the ``standard data'' of $k$-means, and Section \ref{sec: performance on real-world data} tests them on real-world classification data sets. Finally, Section \ref{sec: seismic test} tests Affine factor discovery (Algorithm \ref{alg: continuous barycentric clustering}) on earthquake data to uncover meaningful continuous latent variables.

\subsection{Comparison of soft assignments}
\label{comparison of soft assignments}

We design two families of artificial data. The ``expansion test'' is a collection of three spherical Gaussian distributions in $\mathbb{R}^2$:\\
\centerline{100 samples from $\mathcal{N}\big([0,0]^T
, \frac{1}{10}I \big),$}\\
\centerline{100$(1+t)$ samples from $\mathcal{N}\big([0,2+t]^T, \frac{(1+t)^2}{10}I \big),$}\\
\centerline{100$(1+2t)$ samples from $\mathcal{N}\big(\big[\frac{t+1}{t+2}\sqrt{12(2t+1)}, \frac{2(1-t^2)}{t+2}\big]^T, \frac{(1+2t)^2}{10}I \Big).$}\\
The means and variances are designed so that, for all $t\geq 0$, the three samples are roughly contained in three pairwise adjacent balls of radii $1$, $1+t$ and $1+2t$. As $t$ increases, the sample sizes and radii grow in distinct rates.
The ``dilation test'' is given by\\
\centerline{100 samples from $\mathcal{N}\Big(\begin{bmatrix} 0\\1\end{bmatrix}, \frac{1}{25}\begin{bmatrix} (1+t)^2 & 0\\ 0 & 1\end{bmatrix} \Big),$}\\
\centerline{100 samples from $\mathcal{N}\Big(\begin{bmatrix} 0\\0\end{bmatrix}, \frac{1}{25}\begin{bmatrix} 1 & 0\\ 0 & 1\end{bmatrix} \Big),$}\\
\centerline{100 samples from $\mathcal{N}\Big(\begin{bmatrix} 0\\ -1\end{bmatrix}, \frac{1}{25}\begin{bmatrix} (1+t)^2 & 0\\ 0 & 1\end{bmatrix} \Big),$}\\
which are Gaussians stretched horizontally at different rates.
The expansion test challenges the ``standard data'' (Section \ref{sec: relation to k-means}) in its first two assumptions: similar radii and similar proportions, while the dilation test challenges the assumption of isotropy. In both cases, the amount of deviation from the standard data is parametrized by $t\ge 0$.

The performance of each algorithm is measured by its correctness rate, the percentage of overlap between the true labeling and the labeling produced by the algorithm, maximized over all identifications between the proposed clusters and the true clusters: given the true labeling $\{z_i\}$ and either the labeling $\{k_i\}$ or the stochastic matrix $P$ produced by algorithm, we define the correctness rate as
\begin{equation}
\label{correct rate}
\max_{g \in S_K} \sum_{i} \mathbf{1}_{z_i = g(k_i)} \text{ or } \max_{g\in S_K}\sum_{i}P^i_{g(z_i)}
\end{equation}
where $g$ ranges over the permutation group $S_K$.\\

We first compare the soft assignment algorithms: Fuzzy $k$-means, barycentric clustering (Algorithm \ref{alg: barycentric clustering soft}), and isotropic barycentric clustering (Algorithm \ref{alg: barycentric clustering isotropic soft}). Note that $k$-means' objective (\ref{SSE}) is approximately a linear function in the assignment $P=[P_i^k]$, and thus the optimal solutions are the extremal points of $Dom(P) = \prod_i \Delta^K$, which are hard assignments. Hence, in order to obtain a valid comparison among soft assignments, we use the fuzzy $k$-means algorithm \cite{bezdek2013pattern}, which generalizes $k$-means, minimizing the following objective function:
$$J_c(P,\{\overline{x}_k\}) = \sum_i\sum_k (P_i^k)^c ||x_i-\overline{x}_k||^2.$$
This is a generalization of the sum of squared errors (\ref{SSE}), with an exponent $c>1$ that makes $J_c$ strictly convex in $P^i_k$, and therefore yields soft assignments. Here we adopt the common choice $c=2$.

\begin{algorithm}[H]
\KwData{Sample $\{x_i\}$, exponent $c=2$}
Initialize the means $\overline{x}_k$ and stochastic matrix $P = (P^k_i)$ randomly\\
\While{not converging}{
  \For{$x_i$ in sample and $k$ = 1 to K}{
   		$P_i^k \gets (||x_i-\overline{x}_k||^2)^{1-c}/\sum_j (||x_j-\overline{x}_k||^2)^{1-c}$
   }
  \For{$k$ = 1 to K}{
    $\overline{x}_k \gets \sum_k (P^i_k)^c x_i / \sum_k (P^i_k)^c$
  }
}
\caption{Fuzzy $k$-means}
\label{SoftClassical}
\end{algorithm}

\medskip
Each algorithm is initialized with random means $\overline{x}_k$ and assignment matrix $P$, and all algorithms share the same sample set (for each $t$). To stabilize performance, each algorithm is run 100 times over the same sample set and the result that minimizes this algorithm's objective function ($J_c$ for fuzzy $k$-means, (\ref{objective: clustering}) for barycentric clustering, and (\ref{std formula discrete}) for isotropic barycentric clustering) is selected.

The experimental results are plotted below. The first row corresponds to the expansion test with $t=2.2$, and the second row to the dilation test with $t=3.0$. The class assignment displayed is given by the maximum probability, $k_i \gets \text{argmax}_k P_k^i$.

\begin{figure}[H]
\centering
\subfloat{\includegraphics[scale=0.3]{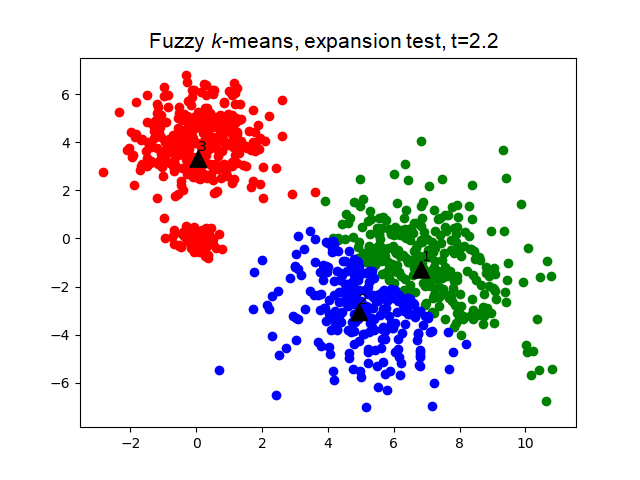}}
\subfloat{\includegraphics[scale=0.3]{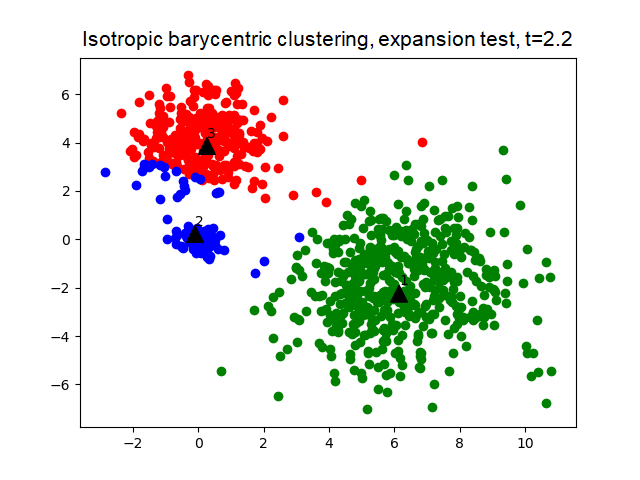}}
\subfloat{\includegraphics[scale=0.3]{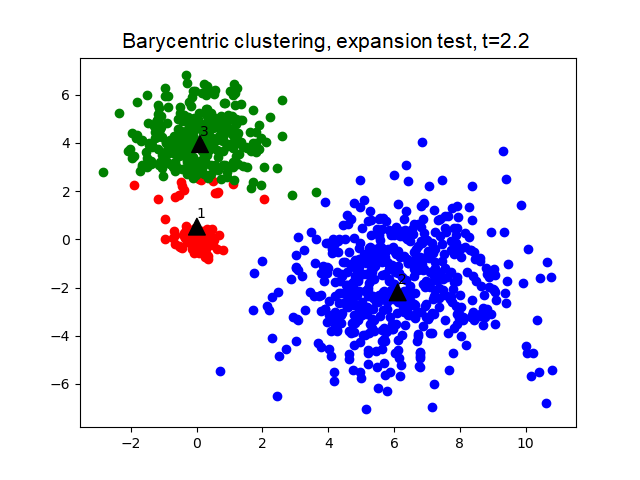}}
\vspace{-3ex}
\centering
\subfloat{\includegraphics[scale=0.3]{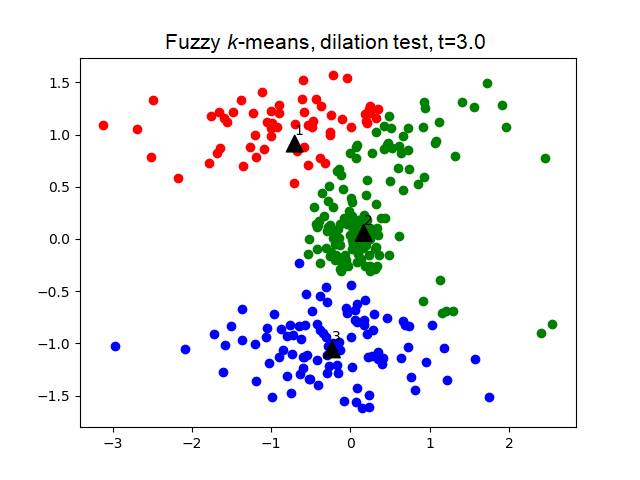}}
\subfloat{\includegraphics[scale=0.3]{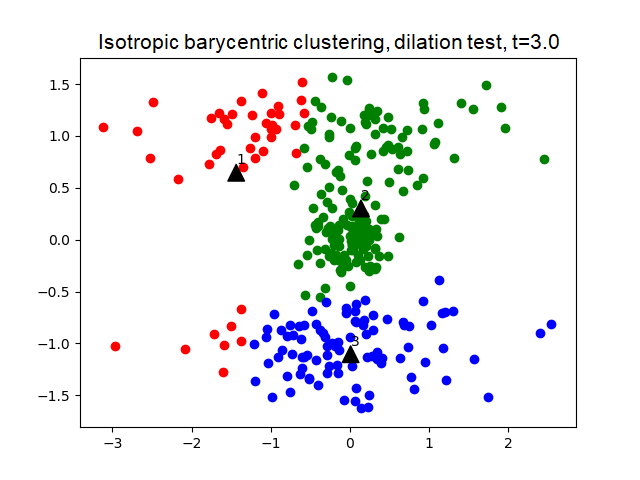}}
\subfloat{\includegraphics[scale=0.3]{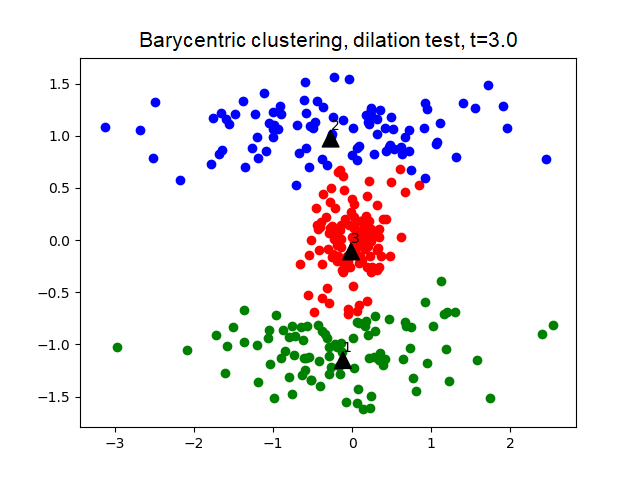}}
\caption{Clusters produced by fuzzy $k$-means (left), isotropic barycentric clustering (middle) and barycentric clustering (right). The black arrows indicate the clusters' means.}
\end{figure}
For the expansion test, fuzzy $k$-means merged the two smaller clusters and split the largest one, whereas the barycentric clustering algorithms only made a few errors on the periphery. For dilation test, the correct clusters are produced only by Barycentric clustering, whereas fuzzy $k$-means and Isotropic barycentric clustering split the clusters. These results are not surprising, since Section \ref{sec: relation to k-means} shows that $k$-means is an approximation to Barycentric clustering that assumes clusters with identical sizes and radii, while Isotropic barycentric clustering, by design, assumes isotropic clusters.

Below are the plots of correct rates (\ref{correct rate}) for $t\in [0,4]$. Fuzzy $k$-means, with a steady decline, is dominated by the barycentric clustering algorithms, while the difference between the latter two is small. Eventually, as $t\to\infty$, all algorithms deviate from the true labeling, since for very large $t$ the Gaussians become so disparate that the true labeling no longer minimizes $Tr[\Sigma_y]$ or yields reasonable clusters that agree with human perception.
\begin{figure}[H]
\centering
\subfloat{\includegraphics[scale=0.5]{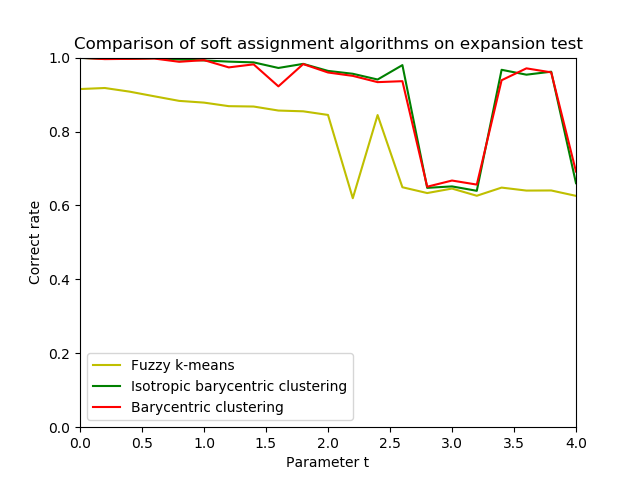}}
\subfloat{\includegraphics[scale=0.5]{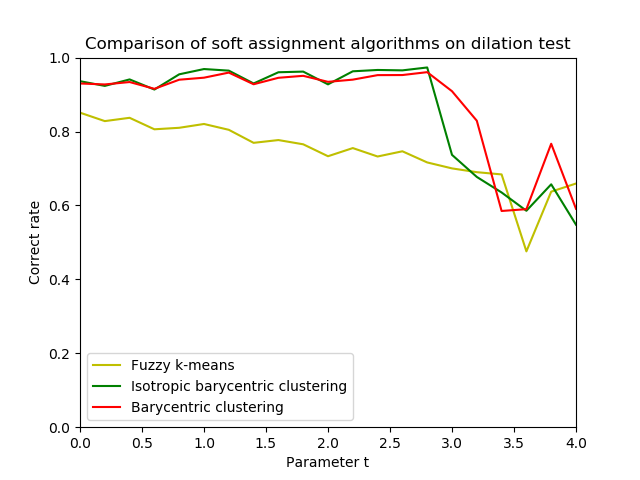}}
\caption{Correctness rates. Left: expansion test. Right: dilation test.}
\end{figure}

In fact, for the dilation test, the contrast can be seen well before $t=3.0$. The following is the result for $t=1.6$, with the shaded regions representing the convex hulls containing the ``core points'' of each class, defined as $C_k = \{x_i, P_k^i>1/3\}$. The soft clusters produced by fuzzy $k$-means exhibit significant overlap, indicating that many sample points are assigned with highly ambiguous probability vectors $P^i$.
\begin{figure}[H]
\centering
\subfloat{\includegraphics[scale=0.3]{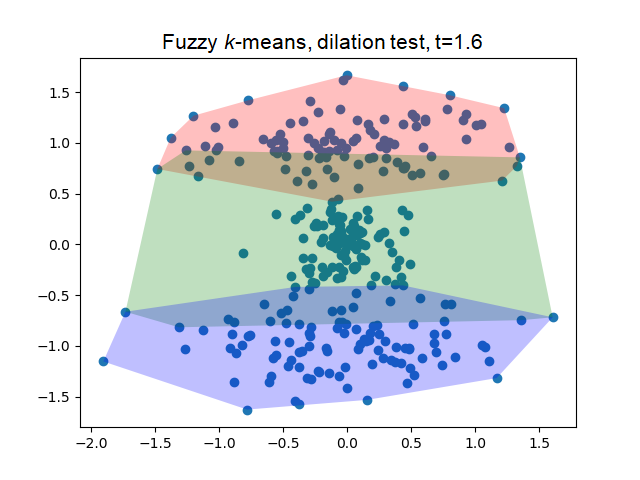}}
\subfloat{\includegraphics[scale=0.3]{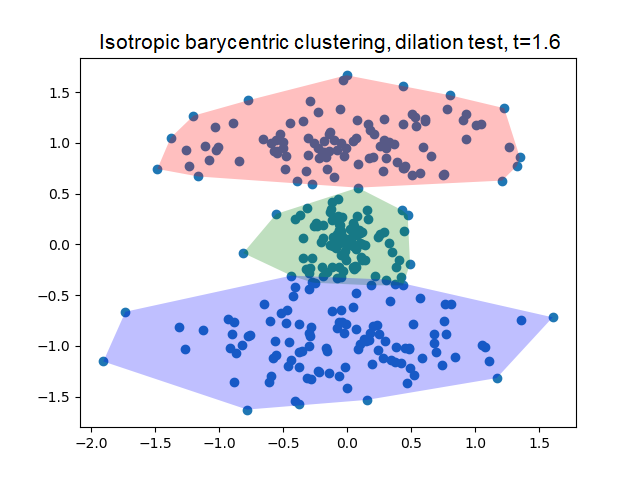}}
\subfloat{\includegraphics[scale=0.3]{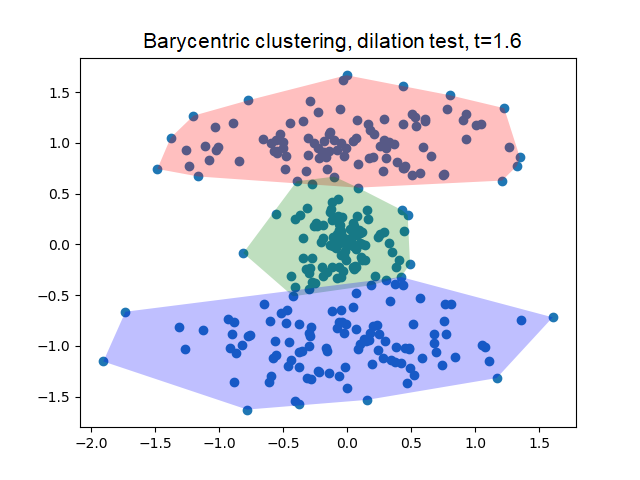}}
\caption{Convex hulls of core points. Only the barycentric clustering algorithms correctly cover each cluster.}
\label{fig: convex hull}
\end{figure}

\subsection{Comparison of hard assignments}
\label{comparison of hard assignments}
We compare next the hard assignment algorithms: $k$-means \cite{alpaydin2009introduction}, Hard barycentric clustering (Algorithm \ref{alg: barycentric clustering hard}), and Barycentric $k$-means (Algorithm \ref{alg: barycentric clustering isotropic hard}). The results on the expansion test and dilation test are plotted below. Again, for each algorithm on each sample set, the objective-minimizing result over 100 trials is selected. The performance comparison is analogous to that of soft assignment.
\begin{figure}[H]
\centering
\subfloat{\includegraphics[scale=0.33]{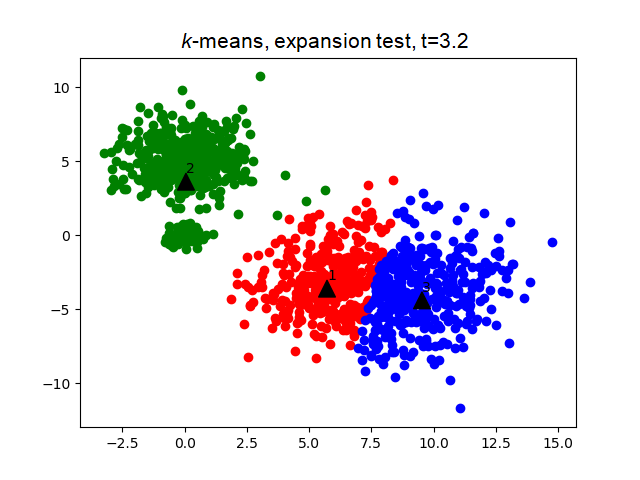}}
\subfloat{\includegraphics[scale=0.33]{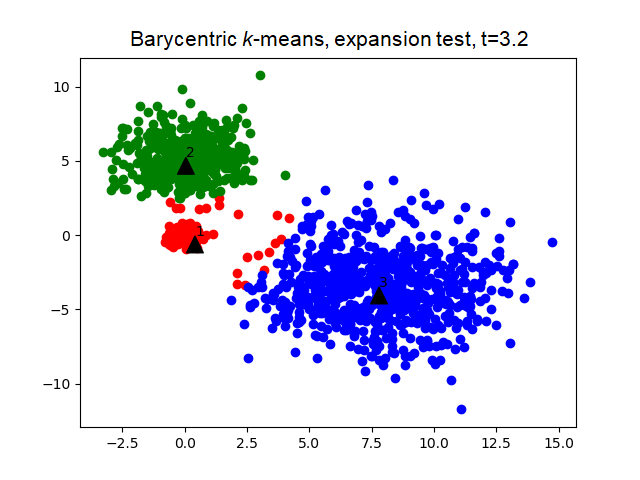}}
\subfloat{\includegraphics[scale=0.33]{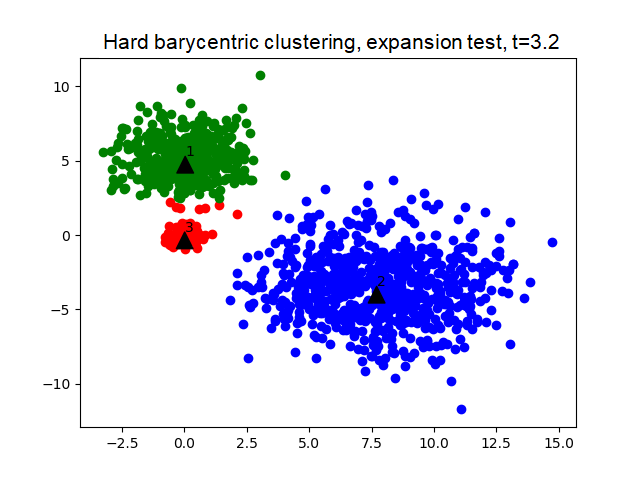}}
\vspace{-3ex}
\centering
\subfloat{\includegraphics[scale=0.33]{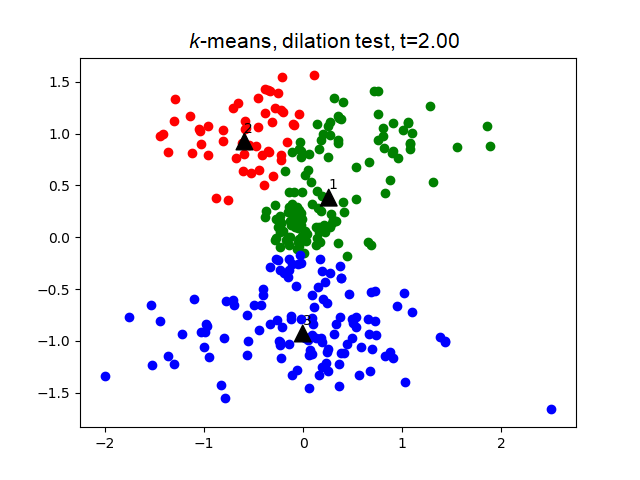}}
\subfloat{\includegraphics[scale=0.33]{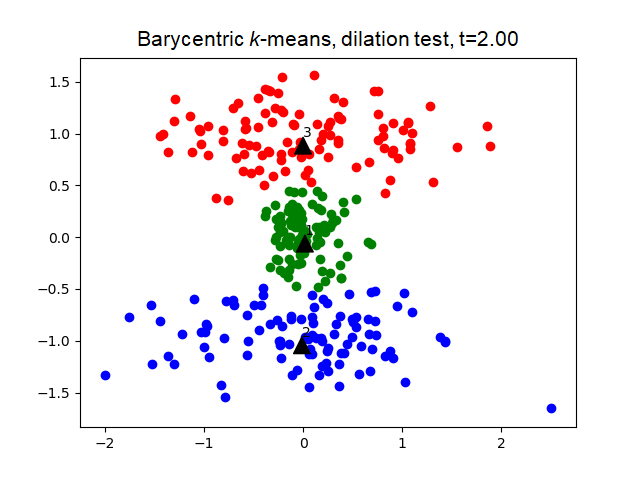}}
\subfloat{\includegraphics[scale=0.33]{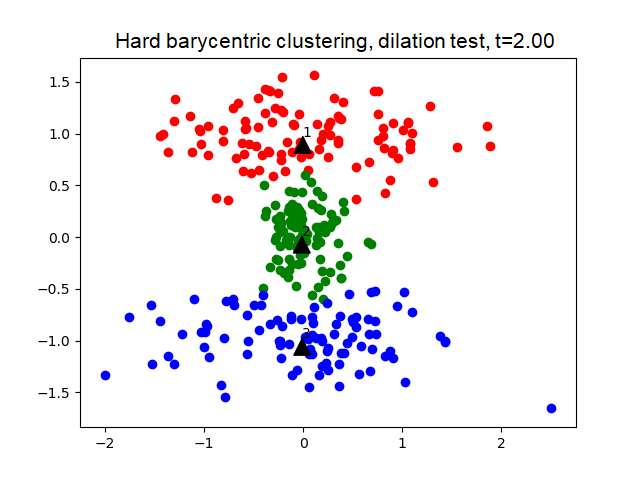}}
\caption{First row: Expansion test with $t=3.2$. Second row: Dilation test with $t=2.0$. Left: $k$-means. Middle: Barycentric $k$-means. Right: Hard barycentric clustering.}
\end{figure}

Nevertheless, Hard barycentric clustering is a simplified version of Barycentric clustering, replacing the latter's gradient descent, which moves by small steps, by class reassignment, which hops among the extremal points of $\prod_i \Delta^K$. The correctness rate curves of dilation test indicate that, whereas Barycentric $k$-means has similar performance as Isotropic barycentric clustering, Hard barycentric clustering is more unstable than Barycentric clustering.
\begin{figure}[H]
\centering
\subfloat{\includegraphics[scale=0.45]{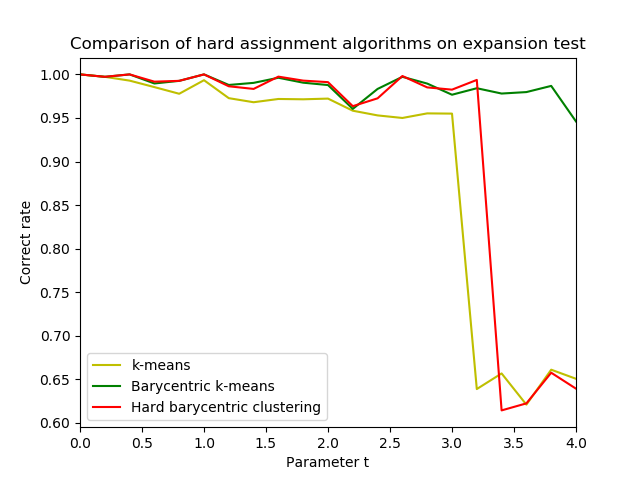}}
\subfloat{\includegraphics[scale=0.45]{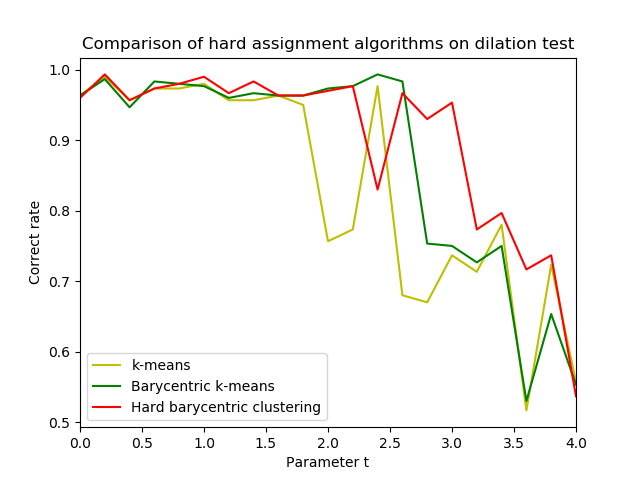}}
\caption{For the expansion test, we have the ranking: $k$-means $<$ Hard barycentric clustering $<$ Barycentric $k$-means, and for the dilation test: $k$-means $<$ Barycentric $k$-means $<$ Algorithm Hard barycentric clustering.}
\end{figure}

\subsection{Clustering on real-world data}
\label{sec: performance on real-world data}
To compare the performance of $k$-means and our algorithms on real-world problems, we use data sets from the online UCI Machine Learning Repository \cite{dheeru2017uci}. These data sets, intended for classification, are provided with labels, which we use to calculate the correctness rates (\ref{correct rate}). The ``Wine'' \cite{aeberhard1994comparative} data set classifies wines based on chemical compositions, ``Seeds'' \cite{charytanowicz2010complete} classifies wheats by the shapes of wheat kernels, ``Breast cancer (original)'' \cite{wolberg1990multisurface} classifies benign/malign cancers by the shapes of cell nuclei, while ``Breast cancer (diagnostic)'' \cite{street1993nuclear} classifies them by other cell statistics, ``Parkison's'' \cite{little2007exploiting} diagnoses the disease by the patients' voice and speech, and ``E.coli'' \cite{horton1996probabilistic} classifies proteins by their sequences and structures.

Since the setting for our clustering problem is for data in $\mathbb{R}^d$, the data's categorical attributes as well as entries with missing data are removed. The samples are normalized along each dimension before clustering, since their attributes are often on disparate scales. Again, each algorithm is run 100 times on each sample set, and the objective-minimizing result is selected.

In the following table, for each sample set, the marked entries are the ones with maximum correctness rates among the hard and soft assignment groups.

\begin{table}[H]
\centering
\begin{tabular}{lcccccc}
\cline{2-7}
\multicolumn{1}{l|}{} & \multicolumn{1}{c|}{\begin{tabular}[c]{@{}c@{}}Wine\end{tabular}} & \multicolumn{1}{c|}{\begin{tabular}[c]{@{}c@{}}Seeds\end{tabular}} & \multicolumn{1}{c|}{\begin{tabular}[c]{@{}c@{}}Breast cancer\\ (original) \end{tabular}} & \multicolumn{1}{c|}{\begin{tabular}[c]{@{}c@{}}Breast cancer\\ (diagnostic)\end{tabular}} & \multicolumn{1}{c|}{\begin{tabular}[c]{@{}c@{}}Parkinson's\end{tabular}} & \multicolumn{1}{c|}{\begin{tabular}[c]{@{}c@{}}E.coli\end{tabular}} \\ \hline
\multicolumn{1}{|l|}{Number of classes $K$} & 3 & 3 & 2 & 2 & 2 & \multicolumn{1}{c|}{8} \\ \cline{1-1}
\multicolumn{1}{|l|}{Dimension $d$} & 13 & 7 & 9 & 30 & 22 & \multicolumn{1}{c|}{6} \\ \cline{1-1}
\multicolumn{1}{|l|}{Sample size} & 178 & 210 & 683 & 569 & 197 & \multicolumn{1}{c|}{336} \\ \hline
Correct rates \% & \multicolumn{1}{l}{} & \multicolumn{1}{l}{} & \multicolumn{1}{l}{} & \multicolumn{1}{l}{} & \multicolumn{1}{l}{} & \multicolumn{1}{l}{} \\ \hline
\multicolumn{1}{|l|}{$k$-means} & 96.63 & 91.90 & 95.75 & \cellcolor[HTML]{C0C0C0}91.04 & 54.36 & \multicolumn{1}{c|}{55.65} \\ \cline{1-1}
\multicolumn{1}{|l|}{Algorithm \ref{alg: barycentric clustering isotropic hard}} & \cellcolor[HTML]{C0C0C0}97.19 & 91.90 & 96.34 & 89.46 & 53.33 & \multicolumn{1}{c|}{\cellcolor[HTML]{C0C0C0}59.82} \\ \cline{1-1}
\multicolumn{1}{|l|}{Algorithm \ref{alg: barycentric clustering hard}} & \cellcolor[HTML]{C0C0C0}97.19 & \cellcolor[HTML]{C0C0C0}92.86 & \cellcolor[HTML]{C0C0C0}96.49 & 90.69 & \cellcolor[HTML]{C0C0C0}60.00 & \multicolumn{1}{c|}{\cellcolor[HTML]{C0C0C0}59.82} \\ \hline
\multicolumn{1}{|l|}{Fuzzy $k$-means} & 60.92 & 74.76 & 87.19 & 73.79 & \cellcolor[HTML]{C0C0C0}54.21 & \multicolumn{1}{c|}{34.01} \\ \cline{1-1}
\multicolumn{1}{|l|}{Algorithm \ref{alg: barycentric clustering isotropic soft}} & \cellcolor[HTML]{C0C0C0}94.34 & \cellcolor[HTML]{C0C0C0}89.56 & \cellcolor[HTML]{C0C0C0}96.51 & 88.78 & 53.25 & \multicolumn{1}{c|}{\cellcolor[HTML]{C0C0C0}57.41} \\ \cline{1-1}
\multicolumn{1}{|l|}{Algorithm \ref{alg: barycentric clustering soft}} & 91.71 & 88.73 & 96.29 & \cellcolor[HTML]{C0C0C0}89.94 & 50.91 & \multicolumn{1}{c|}{52.67} \\ \hline
\end{tabular}
\caption{Our algorithms outperformed (fuzzy) $k$-means on the majority of data sets. For hard assignment, we have the ranking: $k$-means $<$ Barycentric $k$-means $<$ Hard barycentric clustering. For soft assignment we have: fuzzy $k$-means $<$ Barycentric clustering $<$ Isotropic barycentric clustering, although the influence of the isotropic simplification seems minuscule.}
\end{table}

One notable difference between our synthetic tests and these real-world data is that the latter have higher dimensions, which negatively influence fuzzy $k$-means' performance. Previous studies \cite{winkler2011fuzzy} have shown that, as dimension increases, the pairwise distances of the sample points become homogeneous, and fuzzy $k$-means tends to produce uniform assignments: $P^i_k \approx 1/K$. Nevertheless, the barycentric clustering algorithms remain robust, as shown below.
\begin{figure}[H]
\centering
\subfloat{\includegraphics[scale=0.275]{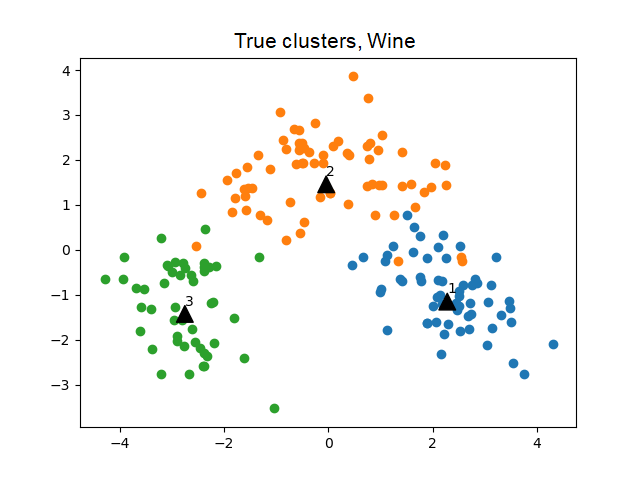}}
\subfloat{\includegraphics[scale=0.275]{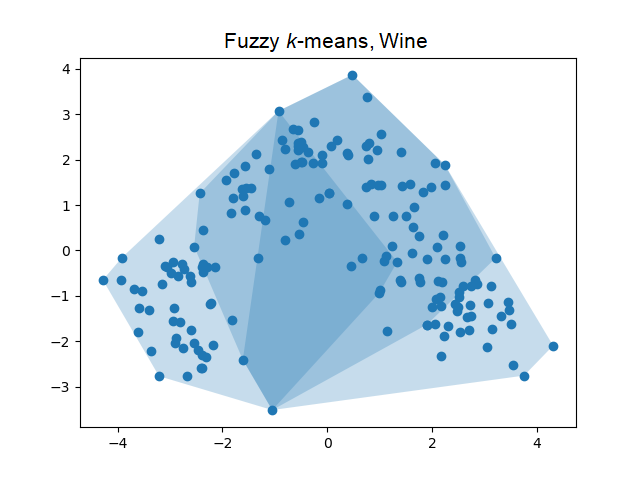}}
\subfloat{\includegraphics[scale=0.275]{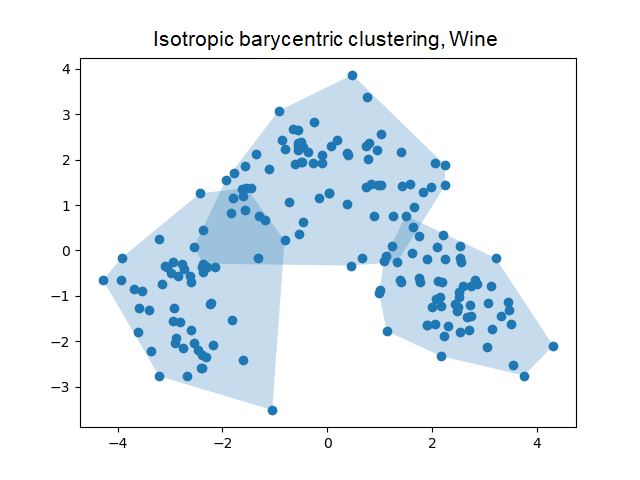}}
\subfloat{\includegraphics[scale=0.275]{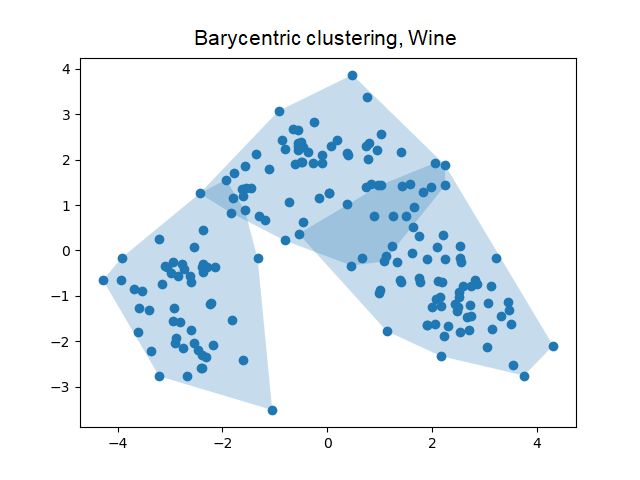}}
\caption{Soft assignment on the ``Wine'' data set. The sample is projected onto its principal 2-plane, with the true labeling given in the first panel. The following three panels correspond to fuzzy $k$-means, Isotropic barycentric clustering, and Barycentric clustering. The shaded polygons represent the convex hulls spanned by the ``core points'' of each cluster, $C_k=\{x_i, P^i_k > 1/4\}$. Fuzzy $k$-means assigned each $x_i$ with ambiguous probabilities ($P^i_k \approx 1/3$), and many sample points belong to the ``cores'' of two or more clusters, whereas the assignments produced by the barycentric clustering algorithms are relatively ``hard''.}
\end{figure}

\subsection{Continuous latent variable and seismic data}
\label{sec: seismic test}

Finally, we test Affine factor discovery (Algorithm \ref{alg: continuous barycentric clustering}). As discussed in Remark \ref{remark: principal curve}, the continuous factor discovery problem (\ref{clustering objective continuous}) generalizes principal curves, so it is natural to evaluate Algorithm \ref{alg: continuous barycentric clustering} in terms of its ``principal curve", that is, the conditional mean $\overline{x}(z)$. Given data that appears to cluster around one or several curves, the curve $\overline{x}(z)$ should discover these patterns.

We use the earthquake data from \cite{USGS2008earthquake}, which covers more than two thousand earthquakes in the 20th century in the Southeast Asia earthquake zone. The sample $\{x_i\}$ is two dimensional, recording the latitude and longitude of the earthquakes. We apply Affine factor discovery with a fixed number of iterations $T=50000$, proportion constant $\alpha=2.5\%$, and learning rate $\eta=5\times 10^{-1}$, and we initialize $\overline{z}_i$ to be proportional to longitude, which is evidently far from the optimal solution. The curve of conditional means $\overline{x}(z)$ is plotted below.
\begin{figure}[H]
\centering
\subfloat{\includegraphics[scale=0.65]{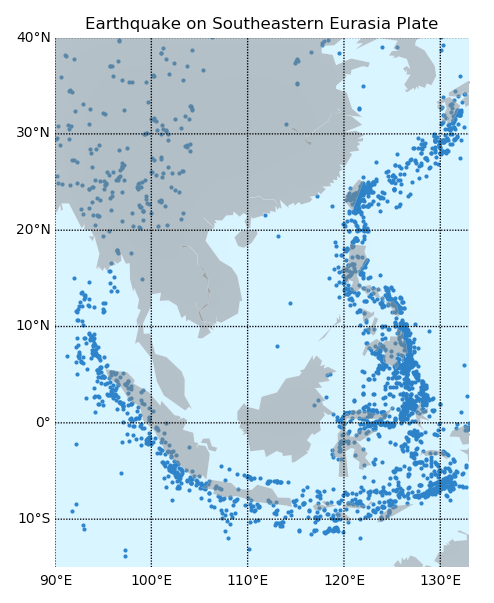}}
\subfloat{\includegraphics[scale=0.45]{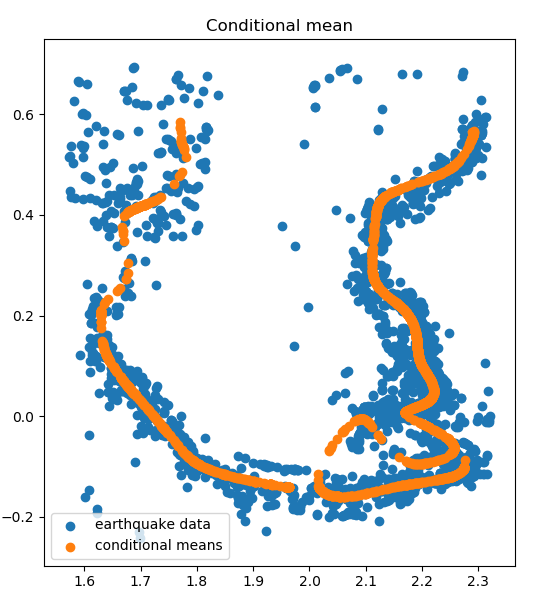}}
\caption{Left: Plot of earthquake data $\{x_i\}$. Right: The conditional means $\overline{x}(z)$ with $z$ sampled from $\nu$.}
\label{fig: earthquake test}
\end{figure}
The Southeast Asia earthquake zone lies at the intersection of the Australian Plate, Eurasian Plate, Indian Plate, Philippine Plate, Yangtze Plate, Amur Plate, and numerous minor plates and microplates. The tectonic boundaries are complex and cannot be represented by a single curve. Affine factor discovery automatically solved this problem using piecewise principal curves.
Note that even though the latent space $Z=\mathbb{R}$ is connected, the support of the latent distribution, supp$\nu$, consists of several disjoint clusters, giving rise to piecewise continuous $\overline{x}(z)$.
\begin{figure}[H]
\centering
\subfloat{\includegraphics[scale=0.8]{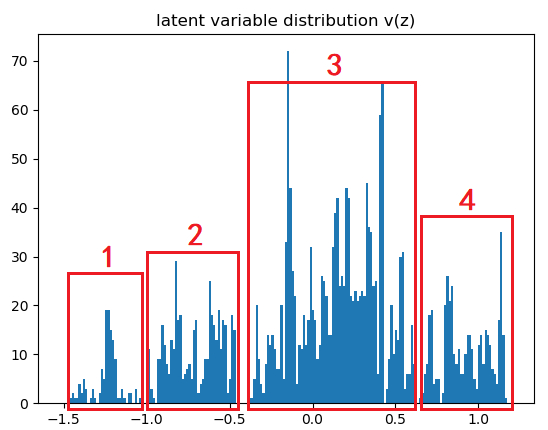}}
\subfloat{\includegraphics[scale=0.65]{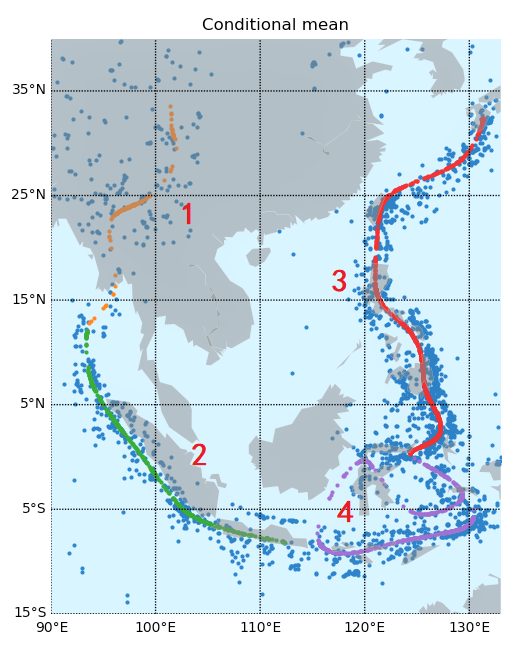}}
\caption{Left: Latent variable distribution $\nu$, represented by the conditional label means $\{\overline{z}_i\}$. We can roughly identify four disjoint components of supp$\nu$, or clusters. Right: The curve $\overline{x}(z)$ of each cluster corresponds to an earthquake belt.}
\label{fig: piecewise pricipal curve}
\end{figure}

\section{Conclusion}
\label{sec: conclusion}

Factor discovery follows the human intuition of finding significant factors that explain the data.
To pose the factor discovery problem in a systematic way, this article proposes an optimal transport based framework such that potential factors are extracted from the data and the resulting conditionals are consolidated into their barycenter, from which the procedure can continue inductively. Extending ideas proposed in \cite{tabak2018explanation}, we design a novel objective function that minimizes the unexplained variance of the data, featured as the barycenter's variance.

This article introduces several clustering and principle surface algorithms for solving the factor discovery problem in the Gaussian setting, which benefit from second moment information. Specifically,
\begin{enumerate}
\item The barycentric clustering algorithms (Algorithms \ref{alg: barycentric clustering soft} and \ref{alg: barycentric clustering hard}) leverage the clusters' covariance matrices and are capable of recognizing non-isotropic clusters with varying radii and sizes.

\item Barycentric $k$-means (Algorithms \ref{alg: barycentric clustering isotropic hard}, designed for isotropic Gaussians) runs as efficiently as $k$-means, yet with little loss in robustness compared to the general barycentric clustering algorithms.

\item Affine factor discovery (Algorithm \ref{alg: continuous barycentric clustering}), an efficient principle curve/surface algorithm that uncovers continuous latent variables, utilizes the variance of each conditional distribution.

\item All algorithms proposed reduce to the classical $k$-means and principle surface algorithm in the simplified setting when all conditionals are equal up to translations, indicating that the optimal transport framework includes the classical intra-class variance-based framework as a special case.
\end{enumerate}


The methodology developed in this article, in addition to its value as a practical tool for clustering and continuous factor discovery, opens the way to further inquiry into various directions:
\begin{enumerate}
\item Efficient non-isotropic algorithm: as discussed in Sections \ref{sec: gradient descent solution} and \ref{sec: isotropic solution}, the main difficulty in solving the general non-isotropic problem is due to the non-commutativity of the covariances $\Sigma_k$. The minimum requirement for commutativity is that all $\Sigma_k$ are simultaneously diagonalizable, that is, there exists some orthonormal basis with which all $\Sigma_k$ become diagonal matrices. Then, the variance formula of Corollary \ref{cor: isotropic std} can be applied to each coordinate, greatly simplifying the objective (\ref{objective: clustering}) and its gradients. This commutative setting is non-isotropic and more general than the isotropic setting, and the corresponding clustering algorithm might inherit both the efficiency of Algorithms \ref{alg: barycentric clustering isotropic hard} and the generality of Algorithms \ref{alg: barycentric clustering soft} and \ref{alg: barycentric clustering hard}.

\item Task-specific metrics: the Wasserstein barycenter minimizes the $L^2$ deformation of the data, so our formulations are all based on the Euclidean distance. Yet for many tasks, the most suitable underlying geometry is not Euclidean or homogeneous. For instance, in the space of images, certain deformations, such as translation and rotation, are more acceptable than others, such as tearing and smearing, even if they have similar Euclidean norm. A promising approach is to extend the algorithms to use the (squared) geodesic distance in the manifold that underlies the data, such as the Fermat distance introduced in \cite{sapienza2018weighted}.

\item Iterative factor discovery: factor discovery can be performed iteratively to uncover a hierarchy of latent factors, which could correspond to diverse latent spaces $\ZS$. It could be of practical interests to develop a principled way of conducting multi-stage factor discovery. For gene expression data, for instance, one could first apply Affine factor discovery to reduce the dimensionality from the order of millions to several hundreds, then apply Barycentric $k$-means to identify the clusters, which might correspond to meaningful demographic groups, and finally apply Affine factor discovery on the resulting barycenter to obtain continuous factors that may offer biomedical insights.

\item Curse of dimensionality: for high dimensional distributions $\rho$, training can become more difficult as the landscape contains more local minima. Moreover, as the pairwise distances between the data points become similar, the gap between the global minimum and many of the bad local minima become less prominent. One effective way to constrain and regularize the solution is to implement the assignment distribution $\nu(\cdot|x)$ or the hard assignment $z(x)$ by neural networks. This modification is straightforward for principal curve/surface, while for clustering, one can place the points $\ZS=\{1,\dots K\}$ on a grid in a low-dimensional $\R^k$ and apply a neural net that maps from $\R^k$ to $\R^d$. This construction resembles the self-organizing maps of \cite{hastie2005elements}, while the use of neural networks allow us to scale to high dimensions.

\end{enumerate}

\section*{Acknowledgments} The work of E. G. Tabak was partially supported by NSF grant DMS-1715753 and ONR grant N00014-15-1-2355.

\bibliographystyle{acm}
\bibliography{main}


\centerline{\bf \Large Appendices:}

\appendix

\section{Existence of derivative}
\label{appendix: existence of derivative}
Here we establish that $\partial \Sigma_y / \partial P_k^i$, the partial derivatives of the barycenter's covariance implicitly defined by (\ref{barycenter covariance discrete}) with respect to the assignment probabilities $P_k^i = \nu(k|x_i)$, always exist. Theorem 1.1 from \cite{del2018taylor} is a useful result on matrix derivatives which we restate below. Denote by $\mathcal{S}_d, \mathcal{S}^+_d \subseteq \mathcal{M}_d$ the linear subspace of symmetric matrices and the cone of positive-definite matrices.
\begin{theorem}
The principal matrix square root function $S\in \mathcal{S}^+_d \to S^{\frac{1}{2}}\in \mathcal{S}^+_d$ is Fréchet differentiable to any order, and the first order derivative is given by the operator $$(\nabla S^{\frac{1}{2}})(H) := \nabla S^{\frac{1}{2}}|_{H} = \int_0^{\infty} e^{-S^{\frac{1}{2}}t} \cdot H \cdot e^{-S^{\frac{1}{2}}t}dt$$
such that for any $H\in \mathcal{S}_d$ and $S+hH\in \mathcal{S}^+_d$ $$\lim_{h\to 0} \frac{1}{h}[(S+hH)^{\frac{1}{2}}-S^{\frac{1}{2}} - h(\nabla S^{\frac{1}{2}})(H)] = 0.$$
\end{theorem}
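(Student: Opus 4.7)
My approach is to obtain the derivative via the implicit function theorem applied to $F(X,S) = X^2 - S$, noting $F(S^{1/2}, S) = 0$, and then verify the integral formula by direct computation. First I observe that $F$ is polynomial, hence real-analytic, from $\mathcal{S}_d^+ \times \mathcal{S}_d^+$ into $\mathcal{S}_d$. The partial differential $D_X F$ at $(S^{1/2}, S)$ is the Sylvester operator $\mathcal{L}: \mathcal{S}_d \to \mathcal{S}_d$ sending $K \mapsto S^{1/2} K + K S^{1/2}$. Because $S^{1/2} \in \mathcal{S}_d^+$ has strictly positive eigenvalues $\lambda_1, \dots, \lambda_d$, in the eigenbasis of $S^{1/2}$ the operator $\mathcal{L}$ acts on matrix entries by $K_{ij} \mapsto (\lambda_i + \lambda_j) K_{ij}$, exhibiting $\mathcal{L}$ as a linear automorphism of $\mathcal{S}_d$ with explicit (and bounded) inverse. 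The analytic implicit function theorem then yields a real-analytic (in particular $C^\infty$ to any order) solution map $S \mapsto S^{1/2}$ in a neighborhood of any $S \in \mathcal{S}_d^+$, with first Fréchet derivative
\[
(\nabla S^{1/2})(H) \;=\; -\mathcal{L}^{-1} \circ D_S F\big|_{(S^{1/2},S)}(H) \;=\; \mathcal{L}^{-1}(H).
\]

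To identify $\mathcal{L}^{-1}$ with the integral operator in the statement, I define $Y := \int_0^\infty e^{-S^{1/2} t}\,H\, e^{-S^{1/2} t}\, dt$ and check two things. Absolute convergence follows from the operator-norm bound $\|e^{-S^{1/2} t}\|_{op} \le e^{-\lambda_{\min} t}$ with $\lambda_{\min} > 0$, giving $\|e^{-S^{1/2} t} H e^{-S^{1/2} t}\|_{op} \le \|H\|_{op} e^{-2 \lambda_{\min} t}$. Then a direct computation using $\frac{d}{dt} e^{-S^{1/2} t} = -S^{1/2} e^{-S^{1/2} t} = -e^{-S^{1/2} t} S^{1/2}$ gives
\[
\mathcal{L}(Y) \;=\; \int_0^\infty \bigl(S^{1/2} e^{-S^{1/2} t} H e^{-S^{1/2} t} + e^{-S^{1/2} t} H e^{-S^{1/2} t} S^{1/2}\bigr)\, dt \;=\; -\int_0^\infty \frac{d}{dt}\bigl(e^{-S^{1/2} t} H e^{-S^{1/2} t}\bigr)\, dt \;=\; H,
\]
by the fundamental theorem of calculus together with the exponential decay of the integrand at $t = \infty$. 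Hence $Y = \mathcal{L}^{-1}(H)$, which is the claimed formula.

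For differentiability to all orders, the analytic implicit function theorem already delivers real-analyticity, but one can also proceed inductively: differentiating the identity $X(S)^2 = S$ successively produces, at each order, a Sylvester-type equation for the next derivative whose left-hand side is again $\mathcal{L}$ (with lower-order terms as a known inhomogeneity), and invertibility of $\mathcal{L}$ furnishes the next derivative as a continuous multilinear map in the perturbation directions.

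The only genuine obstacle is the verification of the explicit integral representation for $\mathcal{L}^{-1}$, which reduces to two mild analytic checks: differentiating under the integral sign (justified by the uniform exponential bound above) and ensuring that $S^{1/2}$ commutes with $e^{-S^{1/2} t}$ so that the Sylvester operator can be rewritten as a total derivative. Both are routine given the spectral gap of $S^{1/2}$, so no substantial difficulty arises.
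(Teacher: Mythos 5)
Your proof is correct. Note, however, that the paper does not itself prove this statement: it explicitly restates Theorem~1.1 of \cite{del2018taylor} and uses it as a black box, so there is no proof in the paper to compare against. Your argument, combining the analytic implicit function theorem applied to $F(X,S)=X^2-S$ with the observation that the linearization $\mathcal{L}(K)=S^{1/2}K+KS^{1/2}$ is an invertible Sylvester operator on $\mathcal{S}_d$ (invertibility being immediate from the spectral decomposition, where it multiplies the $(i,j)$ entry by $\lambda_i+\lambda_j>0$), is a clean and self-contained route. The verification that $Y=\int_0^\infty e^{-S^{1/2}t}He^{-S^{1/2}t}\,dt$ solves $\mathcal{L}(Y)=H$ by writing the integrand as an exact $t$-derivative is exactly the standard identification of the Sylvester inverse as a Lyapunov integral, and your convergence bound $\|e^{-S^{1/2}t}\|\le e^{-\lambda_{\min}t}$ correctly uses the spectral gap of $S^{1/2}$. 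The inductive remark on higher-order derivatives (each order again governed by the same $\mathcal{L}$ with a known inhomogeneity) is a valid alternative to invoking analyticity. One small point worth stating explicitly: that $\mathcal{L}$ and the integral operator preserve $\mathcal{S}_d$ (i.e., map symmetric $H$ to symmetric $Y$) follows since $e^{-S^{1/2}t}$ is symmetric and commutes with $S^{1/2}$; you gesture at this but it is cheap to say outright.
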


Now we prove the existence of the partial derivatives through the Implicit Function Theorem.
\begin{theorem}
For $\Sigma_1,\dots\Sigma_K \in \mathcal{S}_d^+$, the solution $\Sigma_y$ to
the covariance formula (\ref{barycenter covariance discrete}) depends differentiably on $\Sigma_1,\dots\Sigma_K$.
\end{theorem}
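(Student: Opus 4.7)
The plan is to prove this by the Implicit Function Theorem. Define
$$F : \mathcal{S}_d^+ \times (\mathcal{S}_d^+)^K \to \mathcal{S}_d, \qquad F(S;\Sigma_1,\ldots,\Sigma_K) \;=\; S \;-\; \sum_{k=1}^K P_k \bigl(S^{\frac{1}{2}}\Sigma_k S^{\frac{1}{2}}\bigr)^{\frac{1}{2}},$$
treating the weights $P_k\ge 0$, $\sum_k P_k=1$, as fixed. The barycenter equation (\ref{barycenter covariance discrete}) is exactly $F(\Sigma_y;\Sigma_1,\ldots,\Sigma_K)=0$, and the cited result from \cite{alvarez2016fixed} supplies a unique solution $\Sigma_y\in\mathcal{S}_d^+$. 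The goal is to show $D_S F$ at the solution is an invertible linear operator on $\mathcal{S}_d$, and then to invoke IFT.

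The first step is to verify that $F$ is $C^1$ on its domain. Each summand factors as
$$S \;\mapsto\; S^{\frac{1}{2}} \;\mapsto\; S^{\frac{1}{2}}\Sigma_k S^{\frac{1}{2}} \;\mapsto\; \bigl(S^{\frac{1}{2}}\Sigma_k S^{\frac{1}{2}}\bigr)^{\frac{1}{2}},$$
where the outer maps are applications of the principal square root (smooth to any order by the theorem quoted from \cite{del2018taylor}) and the middle map is bilinear in $(S^{\frac{1}{2}},\Sigma_k)$ with values in $\mathcal{S}_d^+$ whenever $S,\Sigma_k\in\mathcal{S}_d^+$. Composing $C^\infty$ maps with a bilinear map yields a $C^\infty$ composition, so $F$ is $C^1$ in the joint variables.

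The main obstacle is the second step: invertibility of $D_S F\bigl|_{S=\Sigma_y}$. By the chain rule and the formula $(\nabla\sqrt{\,\cdot\,})(M)[H]=\int_0^\infty e^{-\sqrt{M}\,t}\,H\,e^{-\sqrt{M}\,t}\,dt$, this derivative has the form $D_S F[H]=H-\sum_k P_k L_k[H]$, where each $L_k$ is a bounded positive operator on $\mathcal{S}_d$ (with respect to the Frobenius inner product). I would establish $D_S F$ is invertible via the variational characterization: $\Sigma_y$ is the unique minimizer over $\mathcal{S}_d^+$ of the Fréchet functional
$$\Phi(S) \;=\; \sum_{k=1}^K P_k\, W_2^2\bigl(\mathcal{N}(0,\Sigma_k),\mathcal{N}(0,S)\bigr),$$
which is strictly geodesically convex in the Bures--Wasserstein geometry, and $F(S)$ is (a constant multiple of) the Euclidean gradient of $\Phi$. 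Strict convexity at the interior minimum forces the Euclidean Hessian---namely $D_S F\bigl|_{\Sigma_y}$---to be positive definite, hence invertible on $\mathcal{S}_d$. As an alternative (should the convexity calculation prove lengthy to cite cleanly), one can invoke the global convergence of the iteration (\ref{iteration scheme}) established in \cite{alvarez2016fixed} to argue that the linearization of that iteration at the fixed point has spectral radius strictly below $1$ on $\mathcal{S}_d$, which in turn yields invertibility of $D_S F$ after an algebraic rearrangement.

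With $F\in C^1$ and $D_S F$ invertible at $(\Sigma_y;\Sigma_1,\ldots,\Sigma_K)$, the Implicit Function Theorem produces a $C^1$ map $(\Sigma_1,\ldots,\Sigma_K)\mapsto\widetilde\Sigma_y$ defined on a neighborhood, satisfying $F(\widetilde\Sigma_y;\cdot)=0$. By the uniqueness of the positive-definite solution to (\ref{barycenter covariance discrete}), this local map agrees with the globally defined barycenter covariance, so $\Sigma_y$ depends differentiably on $\Sigma_1,\ldots,\Sigma_K$, as claimed. Differentiability with respect to the assignment entries $P_k^i$ then follows by composing with (\ref{cluster mean and covariance}) and the obvious dependence $P_k=\frac{1}{N}\sum_i P_k^i$.
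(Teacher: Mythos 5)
Your high‑level strategy matches the paper exactly: define $F(S;\Sigma_1,\dots,\Sigma_K)=S-\sum_k P_k\bigl(S^{1/2}\Sigma_k S^{1/2}\bigr)^{1/2}$ (the paper uses the opposite sign, which is immaterial), observe that $F$ is $C^1$ because the principal square root is $C^\infty$ on $\mathcal{S}_d^+$, apply the Implicit Function Theorem, and then identify the barycenter covariance as the local $C^1$ branch by uniqueness. The $C^1$ step is verified the same way in both arguments. The entire content of the theorem therefore lives in the non‑singularity of $D_S F\big|_{\Sigma_y}$, and that is precisely where you and the paper part ways.

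The paper's proof is a bare‑hands linear‑algebra verification: it applies the integral formula for $\nabla S^{1/2}$ from \cite{del2018taylor}, vectorizes with Kronecker products, pushes the expression through three matrix identities, and arrives at
\begin{equation*}
\Big[\sum_{k} P_k (U_k\otimes U_k)(D_k^{1/2}\otimes I + I\otimes D_k^{1/2})^{-1}(D_k^{1/2}\otimes D_k^{1/2})(U_k^T\otimes U_k^T)\Big](\Sigma_y^{-1/2}\otimes\Sigma_y^{-1/2})\,vec(S)=0,
\end{equation*}
which forces $S=O$ because both factors are positive definite. No convexity or variational property of the barycenter is invoked anywhere.

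Your proposal instead appeals to the variational structure, and this is where there is a genuine gap. First, a small but correctable inaccuracy: $F(S)$ is not the Euclidean gradient of $\Phi(S)=\sum_k P_k W_2^2(\mathcal{N}(0,\Sigma_k),\mathcal{N}(0,S))$; rather $\nabla\Phi(S)=I-S^{-1/2}\sum_k P_k(S^{1/2}\Sigma_k S^{1/2})^{1/2}S^{-1/2}$, so $F(S)=S^{1/2}\nabla\Phi(S)S^{1/2}$. This is fine at the critical point (where $\nabla\Phi(\Sigma_y)=0$, so $D_SF\big|_{\Sigma_y}[H]=\Sigma_y^{1/2}\bigl(\mathrm{Hess}\,\Phi(\Sigma_y)[H]\bigr)\Sigma_y^{1/2}$ and the two linear maps are congruent, hence simultaneously invertible), but it needs to be said. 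The real problem is the clause ``which is strictly geodesically convex in the Bures--Wasserstein geometry.'' That assertion is neither proved nor cited, and it is not a safe folklore fact: the Bures--Wasserstein manifold has non‑negative sectional curvature, so $W_2^2(\cdot,\mu)$ is at best 2‑concave‑bounded along geodesics, and the Fréchet functional's convexity along McCann geodesics is known to fail in general (what is true is convexity along \emph{generalized} geodesics, which is a different notion and does not directly produce a Riemannian Hessian estimate at $\Sigma_y$). Moreover, uniqueness of the minimizer — the fact you do have from \cite{alvarez2016fixed} — does not by itself give a positive‑definite Hessian (think of $x\mapsto x^4$). Your fallback via the spectral radius of the fixed‑point iteration has the same shape of gap: global convergence to the fixed point permits spectral radius exactly equal to $1$ (sublinear convergence), which does not yield invertibility of $I-DG$. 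So both routes you sketch leave the decisive step unestablished, whereas the paper discharges it by an explicit, if lengthy, Kronecker‑product computation that exhibits a trivial kernel.
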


\begin{proof}
For convenience, define the function $$F(\Sigma_y,\Sigma_1,\dots\Sigma_K) = \sum\limits_{k=1}^K P_k (\Sigma_y^{\frac{1}{2}}\Sigma_k\Sigma_y^{\frac{1}{2}})^{\frac{1}{2}} - \Sigma_y$$
It is a composition of $C^1$
functions on $\prod_{i=1}^{K+1} S^+_d$ and thus is $C^1$. To confirm that the gradient $\nabla_{\Sigma_y}F$ is non-singular, perturb $\Sigma_y$ along an arbitrary direction $S\in \mathcal{S}_d$,
\begin{align*}
(\nabla_{\Sigma_y}F)(S) =& \sum\limits_{k=1}^K P_k \Big(\nabla\big(\Sigma_y^{\frac{1}{2}}\Sigma_k\Sigma_y^{\frac{1}{2}}\big)^{\frac{1}{2}}\Big)\Big(\big(\nabla\Sigma_y^{\frac{1}{2}}\big)(S)\Sigma_k\Sigma_y^{\frac{1}{2}} + \Sigma_y^{\frac{1}{2}}\Sigma_k\big(\nabla\Sigma_y^{\frac{1}{2}}\big)(S)\Big) - S\\
=& \sum\limits_{k=1}^K \int_0^{\infty} e^{-(\Sigma_y^{\frac{1}{2}}\Sigma_k\Sigma_y^{\frac{1}{2}})^{\frac{1}{2}}t} \Big[\Big(\int_0^{\infty} e^{-\Sigma_y^{\frac{1}{2}}u}Se^{-\Sigma_y^{\frac{1}{2}}u}du\Big)\Sigma_k\Sigma_y^{\frac{1}{2}}\\
&+\Sigma_y^{\frac{1}{2}}\Sigma_k\Big( \int_0^{\infty} e^{-\Sigma_y^{\frac{1}{2}}u}Se^{-\Sigma_y^{\frac{1}{2}}u}du \Big)\Big] e^{-(\Sigma_y^{\frac{1}{2}}\Sigma_k\Sigma_y^{\frac{1}{2}})^{\frac{1}{2}}t}dt - S
\end{align*}

To evaluate integrals of the form $\int_0^{\infty} e^{-\Sigma^{\frac{1}{2}}t}S e^{-\Sigma^{\frac{1}{2}}t}dt$, we can apply the eigendecomposition $\Sigma = UDU^T$
$$
\int_0^{\infty} e^{-(UDU^T)^{\frac{1}{2}}t}S e^{-(UDU^T)^{\frac{1}{2}}t}dt = U \Big(\int_0^{\infty} e^{-D^{\frac{1}{2}}t} U^T S U e^{-D^{\frac{1}{2}}t} dt \Big)U^T 
= U(T\circ U^T S U)U^T
$$
where $\circ$ is Hadamard product and $T_{ij} = \frac{1}{\sqrt{\lambda_i}+\sqrt{\lambda_j}}$.

Thus, using   
$\Sigma_y^{\frac{1}{2}}\Sigma_k\Sigma_y^{\frac{1}{2}} = U_kD_kU_k^T$,  $\Sigma_y = U_yD_yU_y^T$  
and the corresponding $T_k,T_y$, we obtain
$$(\nabla_{\Sigma_y}F)(S) = \sum\limits_{k=1}^K P_k U_k\Big[T_k\circ U_k^T \Big[ U_y\Big(T_y\circ U_y^T S U_y\Big)U_y^T \Sigma_k\Sigma_y^{\frac{1}{2}} + \Sigma_y^{\frac{1}{2}}\Sigma_k U_y\Big(T_y\circ U_y^T S U_y\Big)U_y^T\Big] U_k \Big] U_k^T - S$$

To check non-singularity, we set $(\nabla_{\Sigma_y}F)(S) = 0$ and vectorize the equation to disentangle $S$. We apply the identity that $vec(AXB) = (B^T\otimes A)vec(X)$ where $\otimes$ is the Kronecker product \cite{horn2012matrix}. Meanwhile, vectorizing the Hadamard product yields
$$vec(T\circ X)  = diag(vec(T))vec(X) = (D^{\frac{1}{2}}\otimes I + I \otimes D^{\frac{1}{2}})^{-1} vec(X).$$
Then, the equation $(\nabla_{\Sigma_y}F)(S) = 0$ becomes,
\begin{align*}
vec(S) &= vec\Bigg\{ \sum\limits_{k=1}^K P_k U_k\Big[T_k\circ U_k^T \Big[ U_y\Big(T_y\circ U_y^T S U_y\Big)U_y^T \Sigma_k\Sigma_y^{\frac{1}{2}} + \Sigma_y^{\frac{1}{2}}\Sigma_k U_y\Big(T_y\circ U_y^T S U_y\Big)U_y^T\Big] U_k \Big] U_k^T \Bigg\}\\
&= \sum\limits_{k=1}^K P_k (U_k \otimes U_k) (D_k^{\frac{1}{2}}\otimes I + I \otimes D_k^{\frac{1}{2}})^{-1} (U_k^T \otimes U_k^T) \big( \Sigma_y^{\frac{1}{2}}\Sigma_k\otimes I + I \otimes \Sigma_k\Sigma_y^{\frac{1}{2}}\big)\\
&\qquad (U_y \otimes U_y) (D_y^{\frac{1}{2}}\otimes I + I \otimes D_y^{\frac{1}{2}})^{-1} (U_y^T \otimes U_y^T) vec(S)
\end{align*}
Splitting the term $\Sigma_y^{\frac{1}{2}}\Sigma_k\otimes I = (\Sigma_y^{\frac{1}{2}}\Sigma_k\Sigma_y^{\frac{1}{2}}\otimes I)\cdot (\Sigma_y^{-\frac{1}{2}}\otimes I)$, we get
\begin{align*}
vec(S) &= \bigg\{\Big[\sum\limits_{k=1}^K P_k (U_k \otimes U_k) (D_k^{\frac{1}{2}}\otimes I + I \otimes D_k^{\frac{1}{2}})^{-1} \big(D_k \otimes I)  (U_k^T \otimes U_k^T)\Big] \cdot\\
&\quad\Big[(U_y \otimes U_y) (D_y^{-\frac{1}{2}} \otimes I) (D_y^{\frac{1}{2}}\otimes I + I \otimes D_y^{\frac{1}{2}})^{-1} (U_y^T \otimes U_y^T)\Big] + \\
&\quad\Big[\sum\limits_{k=1}^K P_k (U_k \otimes U_k) (D_k^{\frac{1}{2}}\otimes I + I \otimes D_k^{\frac{1}{2}})^{-1} \big(I \otimes D_k)  (U_k^T \otimes U_k^T)\Big]\cdot\\
&\quad\Big[(U_y \otimes U_y) (I \otimes D_y^{-\frac{1}{2}}) (D_y^{\frac{1}{2}}\otimes I + I \otimes D_y^{\frac{1}{2}})^{-1} (U_y^T \otimes U_y^T)\Big]\bigg\} vec(S).
\end{align*}
Applying the three identities
$$D_k\otimes I = (D_k^{\frac{1}{2}}\otimes I + I\otimes D_k^{\frac{1}{2}})(D_k^{\frac{1}{2}}\otimes I - I\otimes D_k^{\frac{1}{2}}) + I\otimes D_k,$$
$$I\otimes D_k = (D_k^{\frac{1}{2}}\otimes I + I\otimes D_k^{\frac{1}{2}})^2 - D_k^{\frac{1}{2}}\otimes I - 2 D_k^{\frac{1}{2}}\otimes D_k^{\frac{1}{2}},$$
$$D_y^{-\frac{1}{2}}\otimes D_y^{-\frac{1}{2}} = (D_y^{\frac{1}{2}}\otimes I + I \otimes D_y^{\frac{1}{2}})^{-1}(D_y^{-\frac{1}{2}}\otimes I + I \otimes D_y^{-\frac{1}{2}}),$$
we obtain
\begin{align*}
vec(S) &= \bigg\{\Big[\sum\limits_{k=1}^K P_k (U_k \otimes U_k) (D_k^{\frac{1}{2}}\otimes I - I \otimes D_k^{\frac{1}{2}}) (U_k^T \otimes U_k^T)\Big] \cdot\\
&\quad\Big[(U_y \otimes U_y) (D_y^{-\frac{1}{2}} \otimes I) (D_y^{\frac{1}{2}}\otimes I + I \otimes D_y^{\frac{1}{2}})^{-1} (U_y^T \otimes U_y^T)\Big]\\
&\quad + \Big[\sum\limits_{k=1}^K P_k (U_k \otimes U_k) (D_k^{\frac{1}{2}}\otimes I + I \otimes D_k^{\frac{1}{2}})^{-1} \big(I \otimes D_k)  (U_k^T \otimes U_k^T)\Big]\cdot\\
&\quad\Big[(U_y \otimes U_y) (D_y^{-\frac{1}{2}}\otimes D_y^{-\frac{1}{2}}) (U_y^T \otimes U_y^T)\Big]\bigg\} vec(S)\\
&= \bigg\{\Big[\sum\limits_{k=1}^K P_k (U_k \otimes U_k) (D_k^{\frac{1}{2}}\otimes I + I \otimes D_k^{\frac{1}{2}}) (U_k^T \otimes U_k^T)\Big] \cdot \Big[(U_y \otimes U_y) (D_y^{-\frac{1}{2}}\otimes D_y^{-\frac{1}{2}}) (U_y^T \otimes U_y^T)\Big]\\
&\quad - \Big[\sum\limits_{k=1}^K P_k (U_k \otimes U_k) (D_k^{\frac{1}{2}}\otimes I) (U_k^T \otimes U_k^T)\Big] \cdot \Big[(U_y \otimes U_y) (I \otimes D_y^{-\frac{1}{2}}) (D_y^{\frac{1}{2}}\otimes I + I \otimes D_y^{\frac{1}{2}})^{-1} (U_y^T \otimes U_y^T)\Big]\\
&\quad - \Big[\sum\limits_{k=1}^K P_k (U_k \otimes U_k) (I \otimes D_k^{\frac{1}{2}}) (U_k^T \otimes U_k^T)\Big] \cdot \Big[(U_y \otimes U_y) (D_y^{-\frac{1}{2}} \otimes I) (D_y^{\frac{1}{2}}\otimes I + I \otimes D_y^{\frac{1}{2}})^{-1} (U_y^T \otimes U_y^T)\Big]\\
&\quad - \Big[\sum\limits_{k=1}^K P_k (U_k \otimes U_k) (D_k^{\frac{1}{2}}\otimes I + I \otimes D_k^{\frac{1}{2}})^{-1}(D_k^{\frac{1}{2}}\otimes D_k^{\frac{1}{2}}) (U_k^T \otimes U_k^T)\Big] \cdot\\
&\qquad\Big[(U_y \otimes U_y) (D_y^{-\frac{1}{2}}\otimes D_y^{-\frac{1}{2}}) (U_y^T \otimes U_y^T)\Big]\bigg\} vec(S)\\
&= \bigg\{\big(\Sigma_y \otimes I\big) \cdot \Big[(U_y \otimes U_y) (D_y^{-\frac{1}{2}} \otimes I) (D_y^{\frac{1}{2}}\otimes I + I \otimes D_y^{\frac{1}{2}})^{-1} (U_y^T \otimes U_y^T)\Big]\\
&\quad + \big(I \otimes \Sigma_y\big) \cdot \Big[(U_y \otimes U_y) (I \otimes D_y^{-\frac{1}{2}}) (D_y^{\frac{1}{2}}\otimes I + I \otimes D_y^{\frac{1}{2}})^{-1} (U_y^T \otimes U_y^T)\Big]\\
&\quad - \Big[\sum\limits_{k=1}^K P_k (U_k \otimes U_k) (D_k^{\frac{1}{2}}\otimes I + I \otimes D_k^{\frac{1}{2}})^{-1}(D_k^{\frac{1}{2}}\otimes D_k^{\frac{1}{2}}) (U_k^T \otimes U_k^T)\Big] \cdot\\
&\quad\Big[(U_y \otimes U_y) (D_y^{-\frac{1}{2}}\otimes D_y^{-\frac{1}{2}}) (U_y^T \otimes U_y^T)\Big]\bigg\} vec(S)\\
&= \bigg\{ I - \Big[\sum\limits_{k=1}^K P_k (U_k \otimes U_k) (D_k^{\frac{1}{2}}\otimes I + I \otimes D_k^{\frac{1}{2}})^{-1}(D_k^{\frac{1}{2}}\otimes D_k^{\frac{1}{2}}) (U_k^T \otimes U_k^T)\Big] \cdot \big(\Sigma_y^{-\frac{1}{2}} \otimes \Sigma_y^{-\frac{1}{2}}\big) \bigg\} vec(S)
\end{align*}
Hence, it follows that
\begin{equation*}
\Big[\sum\limits_{k=1}^K P_k (U_k \otimes U_k) (D_k^{\frac{1}{2}}\otimes I + I \otimes D_k^{\frac{1}{2}})^{-1}(D_k^{\frac{1}{2}}\otimes D_k^{\frac{1}{2}}) (U_k^T \otimes U_k^T)\Big] \cdot \big(\Sigma_y^{-\frac{1}{2}} \otimes \Sigma_y^{-\frac{1}{2}}\big) vec(S) = O
\end{equation*}
Denote the lengthy matrix by $\big[\sum_{k=1}^K P_k Y_k \big] Y$. Then, $Y$ is positive-definite, and since
\begin{equation*}
(D_k^{\frac{1}{2}}\otimes I + I \otimes D_k^{\frac{1}{2}})^{-1}(D_k^{\frac{1}{2}}\otimes D_k^{\frac{1}{2}})
\end{equation*}
are diagonal with positive entries, $Y_k$ are also positive-definite, so the equation holds if and only if $S=O$. We conclude that the gradient $(\nabla_{\Sigma_y} F)$ is always non-singular, and the implicit function theorem implies that $\Sigma_y$ depends differentiably on $\{\Sigma_1,\dots \Sigma_K\} \subseteq \prod_{i=1}^K\mathcal{S}_d^+$.
\end{proof}

It follows that since $\Sigma_k$ depends differentiably on $P_k^i$, the derivatives $\partial \Sigma_y / \partial P_k^i$ exist.

\section{Computation of derivative}
\label{appendix: computation of derivative}
To solve for the gradient $\nabla_{P^i}Tr[\Sigma_y]$, set $\Lambda_k^i = \partial \Sigma_y / \partial P_k^i \in \mathcal{S}_d$ as an unknown variable. Rather artificially, define the term
\begin{align*}
\Omega_k^i &:= \frac{1}{P_k}\frac{\partial (P_k)^2\Sigma_k}{\partial P_k^i} = \frac{1}{P_k} \frac{\partial P_k \sum_{i=1}^N P^i_k (x^i-\overline{x}_k) \cdot (x^i-\overline{x}_k)^T }{\partial P_k^i}\\
&= \Sigma_k + (x_i-\overline{x_k})\cdot (x_i-\overline{x_k})^T + 2\sum_{i=1}^N P^i_k (x^i-\overline{x}_k) \cdot \Big(-\frac{\partial \overline{x}_k}{\partial P^i_k}\Big)^T\\
&= \Sigma_k + (x_i-\overline{x_k})\cdot (x_i-\overline{x_k})^T
\end{align*}
Taking partial derivative $\partial P_k^i$ on both sides of the covariance formula (\ref{barycenter covariance discrete}), we obtain
\begin{align*}
\Lambda_k^i &= \sum\limits_{h\neq k} \Big(\nabla\big(\Sigma_y^{\frac{1}{2}}\big((P_h)^2\Sigma_h\big)\Sigma_y^{\frac{1}{2}}\big)^{\frac{1}{2}}\Big)\Big(\big(\nabla\Sigma_y^{\frac{1}{2}}\big)(\Lambda_k^i)\big((P_h)^2\Sigma_h\big)\Sigma_y^{\frac{1}{2}} + \Sigma_y^{\frac{1}{2}}\big((P_h)^2\Sigma_h\big)\big(\nabla\Sigma_y^{\frac{1}{2}}\big)(\Lambda_k^i)\Big)\\
&\quad + \Big(\nabla\big(\Sigma_y^{\frac{1}{2}}\big((P_k)^2\Sigma_k\big)\Sigma_y^{\frac{1}{2}}\big)^{\frac{1}{2}}\Big)\Big(\big(\nabla\Sigma_y^{\frac{1}{2}}\big)(\Lambda_k^i)\big((P_k)^2\Sigma_k\big)\Sigma_y^{\frac{1}{2}} + \Sigma_y^{\frac{1}{2}}P_k\Omega_k^i\Sigma_y^{\frac{1}{2}} + \Sigma_y^{\frac{1}{2}}\big((P_k)^2\Sigma_k\big)\big(\nabla\Sigma_y^{\frac{1}{2}}\big)(\Lambda_k^i)\Big)\\
&= \sum\limits_{h=1}^K U_h\Big[(P_h)^{-1}T_h\circ U_h^T \Big[ U_y\Big(T_y\circ U_y^T \Lambda_k^i U_y\Big)U_y^T (P_h)^2\Sigma_h\Sigma_y^{\frac{1}{2}} + \Sigma_y^{\frac{1}{2}}(P_h)^2\Sigma_h U_y\Big(T_y\circ U_y^T \Lambda_k^i U_y\Big)U_y^T\Big] U_h \Big] U_h^T\\
&\quad + U_k\Big[(P_k)^{-1} T_k\circ U_k^T \big(\Sigma_y^{\frac{1}{2}}P_k\Omega_k^i\Sigma_y^{\frac{1}{2}} + \Sigma_y^{\frac{1}{2}}\big) U_k \Big] U_k^T\\
&= \sum\limits_{h=1}^K P_h U_h\Big[T_h\circ U_h^T \Big[ U_y\Big(T_y\circ U_y^T \Lambda_k^i U_y\Big)U_y^T \Sigma_h\Sigma_y^{\frac{1}{2}} + \Sigma_y^{\frac{1}{2}}\Sigma_h U_y\Big(T_y\circ U_y^T \Lambda_k^i U_y\Big)U_y^T\Big] U_h \Big] U_h^T\\
&\quad + U_k\Big[T_k\circ U_k^T \big(\Sigma_y^{\frac{1}{2}}\Omega_k^i\Sigma_y^{\frac{1}{2}} + \Sigma_y^{\frac{1}{2}}\big) U_k \Big] U_k^T
\end{align*}
Vectorize and simplify it by the previous computations,
\begin{align*}
vec(\Lambda_k^i) &= \bigg\{ I - \Big[\sum\limits_{h=1}^K P_h (U_h \otimes U_h) (D_h^{\frac{1}{2}}\otimes I + I \otimes D_h^{\frac{1}{2}})^{-1}(D_h^{\frac{1}{2}}\otimes D_h^{\frac{1}{2}}) (U_h^T \otimes U_h^T)\Big] \cdot \big(\Sigma_y^{-\frac{1}{2}} \otimes \Sigma_y^{-\frac{1}{2}}\big) \bigg\} \\
&\quad \cdot vec(\Lambda_k^i) + (U_k \otimes U_k) (D_k^{\frac{1}{2}}\otimes I + I \otimes D_k^{\frac{1}{2}})^{-1} (U_k^T \otimes U_k^T)(\Sigma_y^{\frac{1}{2}} \otimes \Sigma_y^{\frac{1}{2}}) vec(\Omega_k^i)
\end{align*}
Therefore,
\begin{align*}
vec(\Lambda_k^i) &= \big(\Sigma_y^{\frac{1}{2}} \otimes \Sigma_y^{\frac{1}{2}}\big)\\
&\quad \Big[\sum\limits_{h=1}^K P_h (U_h \otimes U_h) (D_h^{\frac{1}{2}}\otimes I + I \otimes D_h^{\frac{1}{2}})^{-1}(D_h^{\frac{1}{2}}\otimes D_h^{\frac{1}{2}}) (U_h^T \otimes U_h^T)\Big]^{-1} \\
&\quad \Big[(U_k \otimes U_k) (D_k^{\frac{1}{2}}\otimes I + I \otimes D_k^{\frac{1}{2}})^{-1} (U_k^T \otimes U_k^T)\Big](\Sigma_y^{\frac{1}{2}} \otimes \Sigma_y^{\frac{1}{2}}) \cdot vec(\Omega_k^i)
\end{align*}
Denote the solution by $\Lambda_k^i = vec^{-1}(W_k vec(\Omega_k^i))$, we obtain an expression for the gradient of the objective function
\begin{equation*}
\nabla_{P^i} Tr[\Sigma_y] = \sum_{k=1}^K Tr[\Lambda_k^i]\e_k= \sum_{k=1}^K vec(I)^T \cdot W_k \cdot vec(\Omega_k^i)\e_k
\end{equation*}

\section{Proof of Theorem \ref{thm: scatter formulation of total transport cost}}
\label{appendix: scatter formulation}

We closely follow the proof of the variance decomposition theorem in \cite{yang2019BaryNet}. Let us assume that $\ZS = \{1,\dots K\}$ with clusters $\rho_k$ and weights $P_k$, and that all clusters $\rho_k \in \PS_{2,ac}(\R^d)$. It follows from Theorem 6.1 of \cite{agueh2011barycenters} that the barycenter $\mu$ is unique and $\mu\in\PS_{2,ac}(\R^d)$. The general formula (\ref{scatter formulation of total transport cost}) will follow from the standard approximation method in \cite{yang2019BaryNet}.

By Theorem 2.1 of \cite{cuesta1996bounds}, as $\rho_k,\mu$ are all Gaussian, there exists a unique transport map $T_k$ that transports $\rho_k$ to $\mu$ and has the form (\ref{optimal affine transport maps}). Similarly, there are unique optimal transport maps $T_{kh}$ from any $\rho_k$ to any $\rho_h$. We denote them by
\begin{equation*}
T_k(x) = A_kx+b_k, ~T_{kh} = A_{kh}x + b_{kh}
\end{equation*}
where the $A$'s are positive-definite matrices. Consider $T^{-1}_{kh} \circ T_h^{-1}$: It is an affine map that transports $\rho_k$ back to itself and has a positive-definite matrix $A_{kh}^{-1}A_h^{-1}A_k$. Since $\rho_k$ is a non-degenerate Gaussian, there is only one such affine map, namely the identity (One could examine each eigensubspace of $A_{kh}^{-1}A_h^{-1}A_k$ and consider how it operates there). It follows that $T_h^{-1}\circ T_k$ is exactly the optimal transport map $T_{kh}$.

Define the random variable $Y\sim \mu$ and the random variables $X_k = T_k^{-1}(Y) \sim \rho_k$. Then, by the optimality of $T_k$,
\begin{equation*}
\int W_2^2\big(\rho(\cdot|z),\mu\big) d\nu(z) = \sum_{k=1}^K P_k \E\big[\|Y-X_k\|^2\big]
\end{equation*}
Meanwhile, the identity $T_h^{-1}\circ T_k = T_{kh}$ implies that $X_h = T_{kh}(X_k)$ and that $W_2^2(\rho_k,\rho_h) = \E\big[\|X_k-X_h\|^2\big]$. Hence,
\begin{equation*}
\iint W_2^2\big(\rho(\cdot|z_1),\rho(\cdot|z_2)\big) d\nu(z_1)d\nu(z_2) = \sum_{k,h=1}^K P_k P_h \E\big[\|X_k-X_h\|^2\big]
\end{equation*}
Yet, Remark 3.9 of \cite{agueh2011barycenters} implies that $Y$ is exactly the average of $X_k$:
\begin{equation*}
Y = \sum_{k=1}^K P_k X_k
\end{equation*}
which leads to
\begin{equation*}
\sum_{k=1}^K P_k \E\big[\|Y-X_k\|^2\big] = \sum_{k,h=1}^K P_k P_h \E\big[\|X_k-X_h\|^2\big]
\end{equation*}
Hence, we have established equality (\ref{scatter formulation of total transport cost}).

\section{Derivation of gradients in Section \ref{sec: continuous clustering}}
\label{appendix: continuous gradient}

For each $z$, the partial derivative of the conditional standard deviation $\sigma(z)$ times the marginal latent distribution $\nu$ with respect to the conditional latent distribution $\nu_i = \nu(z|x_i)$ is given by
\begin{align*}
\frac{\partial\big(\sigma(z)\nu(z)\big)}{\partial\nu_i} &= \frac{\sum_{j=1}^N ||x_j - \overline{x}(z)||^2 \nu_j + ||x_i - \overline{x}(z)||^2 \cdot \sum_{j=1}^N \nu_j - 2\sum_{j=1}^N \nu_j (x_j-\overline{x})^T\cdot\frac{\partial \overline{x}(z)}{\partial \nu_i} }{ 2N \Big(\sum_{j=1}^N \nu_j \cdot \sum_{j=1}^N ||x_j - \overline{x}(z)||^2 \nu_j\Big)^{\frac{1}{2}}}  \nonumber\\
&= \frac{1}{2N}\Big[\sigma(z)+\frac{||x_i-\overline{x}(z)||^2}{\sigma(z)}\Big]
\end{align*}

Meanwhile, for each $\nu_i$, the partial derivatives of (\ref{soft assignment over real line}) are given by
\begin{equation*}
\frac{\partial \nu_i(z)}{\partial \overline{z}_j} = \Big[ -\frac{\overline{z}_j}{||\overline{z}||^3} + \frac{N}{\alpha^2}\frac{\overline{z}_j ||z-\overline{z}_i||^2}{||\overline{z}||^4} + \mathbf{1}_{i=j} \frac{N}{\alpha^2} \frac{z-\overline{z}_i}{||\overline{z}||^2} \Big] \nu_i(z)
\end{equation*}
So the Jacobian matrix $J_{\overline{z}}p$ of the vector $p:=[\nu_i]$ with respect to the parameter vector $\overline{z}:=[\overline{z}_j]$ is
\begin{align*}
J^T_{\overline{z}}p(z) &= \overline{z} \cdot \Big[ \big( -\frac{1}{||\overline{z}||^3} + \frac{N}{\alpha^2}\frac{||z-\overline{z}_i||^2}{||\overline{z}||^4} \big) \nu_i(z) \Big]^T_i
+ \text{diag}\Big( \frac{N}{\alpha^2} \frac{z-\overline{z}_i}{||\overline{z}||^2} \nu_i(z) \Big)\\
&= \overline{z} \cdot \Big[ \big( -\frac{1}{||\overline{z}||} + \frac{||z-\overline{z}_i||^2}{\epsilon^2} \big) \frac{\nu_i(z)}{||\overline{z}||^2} \Big]^T_i
+ \text{diag}\Big(  \frac{z-\overline{z}_i}{\epsilon^2} \nu_i(z) \Big)
\end{align*}

To show that we can differentiate under the integral sign of $\sigma$ in (\ref{std variance form continuous}), we rewrite the standard deviation $\sigma(z)$ into
$$\sigma(z) = \Big[\frac{1}{2}\iint ||x-y||^2 d\rho(x|z)d\rho(y|z)\Big]^{\frac{1}{2}} = \Big(\frac{1}{2}\sum_{i,j=1}^N ||x_i-x_j||^2 \nu_i \nu_j\Big)^{\frac{1}{2}}
= \frac{\Big(\sum_{i,j=1}^N ||x_i-x_j||^2 \nu_i \nu_j \Big)^{\frac{1}{2}} }{ \sqrt{2} \sum_{i=1}^N \nu_i }$$
Define the matrix $D = [D_{ij}] = [||x_i-x_j||^2]$. Then, the distribution term $d\nu(z)$ in $\sigma$ can be cancelled, and we obtain a simpler formula for $\sigma$:
\begin{align*}
\sigma &= \frac{1}{\sqrt{2}N}\int \Big(\sum_{i,j=1}^N D_{ij} \nu_i\nu_j \Big)^{\frac{1}{2}} dz = \frac{1}{\sqrt{2}N} \int ||\sqrt{D}p|| dz
\end{align*}
It is straightforward to show that for each $\overline{z}_0 \in \mathbb{R}^N$ ($\overline{z} \neq \mathbf{0}$), there exists some compact neighborhood $\overline{U}$ ($\overline{z}_0 \in U^o \subseteq \overline{U} \subseteq \mathbb{R}^N-\{\mathbf{0}\}$) such that the integrand $||\sqrt{D}p||$ and its $\overline{z}$ gradient are uniformly bounded by some integrable function. Thereby, Theorem 3.2 of \cite{lang2013interchange} shows that at $\overline{z}_0$, we are allowed to take derivatives under the integral sign:
\begin{align*}
\nabla_{\overline{z}} \sigma
=& \int \nabla_{\overline{z}}(\sigma(z) \nu(z)) dz = \int \frac{\nabla_{\overline{z}}\big(\sigma(z)\nu(z)\big)}{\nu(z)} d\nu(z)\\
=& \int \frac{1}{\nu(z)} J^T_{\overline{z}}p(z) \cdot \nabla_{p}\big(\sigma(z)\nu(z)\big) d\nu(z)\\
\frac{\partial \sigma}{\partial \overline{z}_i}
=& \int \frac{1}{2N\cdot \nu(z)} \Big\{ \frac{\overline{z}_i}{||\overline{z}||^2} \sum_{j=1}^N \Big[\sigma(z)+\frac{||x_j-\overline{x}(z)||^2}{\sigma(z)}\Big] \cdot \Big[ \frac{||z-\overline{z}_j||^2}{\epsilon^2} - \frac{1}{||\overline{z}||} \Big] \nu_j(z)\\
&+ \frac{z-\overline{z}_i}{\epsilon^2} \nu_i(z) \Big[\sigma(z)+\frac{||x_i-\overline{x}(z)||^2}{\sigma(z)}\Big] \Big\} d\nu(z)
\end{align*}
In particular, since the Jacobian $J_{\overline{z}}p$ consists of a rank-one matrix and a diagonal matrix, computing the above integrand for any $z$ takes only linear time, $O(N)$.

\end{document}